\newcommand{\sym}[1]{\ensuremath{u^*_{#1}}}
\newcommand{\Omegasym}[1]{\ensuremath{\Omega^*_{#1}}}
\newtheorem{thm}{Theorem}[section]
\newtheorem{cor}[thm]{Corollary}
\newtheorem{prop}[thm]{Proposition}
\newtheorem{defn}{Definition}
\theoremstyle{definition}
\newtheorem{rem}[thm]{Remark}
\theoremstyle{remark}
\def\O{\Omega}
\def\S{\Sigma} 
\def\n{\nabla}
\def\p{\partial}
\def\a{\alpha}
\def\b{\beta}
\def\n{\nabla}
\def\o{\omega}
\def\O{\Omega}
\def\p{\partial}
\def\a{\alpha}
\def\b{\beta}
\def\g{\gamma}
\def\d{\delta}
\def\k{\kappa}
\def\l{\lambda}
\def\s{\sigma}
\def\De{\Delta}
\def\n{\nabla}
\def\<{\langle}
\def\>{\rangle}
\def\div{{\rm div}}
\def\De{\Delta}
\def\n{\nabla}
\def\RR{\mathbb{R}}
\def\o{\omega}
\def\O{\Omega}
\def\p{\partial}
\def\a{\alpha}
\def\b{\beta}
\def\g{\gamma}
\def\d{\delta}
\def\l{\lambda}
\def\s{\sigma}
\def\Rn{\mathbb R^n}
\def\de{\partial}
\def\R{\mathbb{R}}
\def\rr{\mathbb{R}}
\def\ve{\varepsilon}
\patchcmd{\abstract}{\scshape\abstractname}{\textbf{\abstractname}}{}{}
\def\@makefnmark{} 
\begin{document}
\title{Symmetrization with respect to mixed volumes}
 \author{Francesco Della Pietra}%
       \address{Universit\`a degli studi di Napoli Federico II, Dipartimento di Matematica e Applicazioni ``R. Caccioppoli'', Via Cintia, Monte S. Angelo - 80126 Napoli, Italia}
\email{f.dellapietra@unina.it}
 \author{Nunzia Gavitone}%
       \address{Universit\`a degli studi di Napoli Federico II, Dipartimento di Matematica e Applicazioni ``R. Caccioppoli'', Via Cintia, Monte S. Angelo - 80126 Napoli, Italia}
      \email{nunzia.gavitone@unina.it}
\author{Chao Xia}
\address{School of Mathematical Sciences\\
Xiamen University\\
361005, Xiamen, P.R. China}
\email{chaoxia@xmu.edu.cn}
\thanks{
\indent F.D.P. and N.G. are supported by GNAMPA of INdAM; C.X. is supported by the  NSFC (Grant No. 11871406) and the Natural Science Foundation of Fujian Province of China (Grant No. 2017J06003)}

\numberwithin{equation}{section}

\begin{abstract}
In this paper we introduce a new symmetrization with respect to mixed volume or anisotropic curvature integral, which generalizes the one with respect to quermassintegral due to Talenti \cite{ta2} and Tso \cite{Tso}. We show a P\'olya-Szeg\H o type principle for such symmetrization --- it diminishes the anisotropic Hessian integral for quasi-convex functions. We achieve this by a systematic study of invariants on non-symmetric matrices with real eigenvalues and the higher order anisotropic mean curvatures of level sets, which may be of independent interest. As applications, we establish a comparison principle for anisotropic Hessian equations and sharp anisotropic Sobolev inequalities. \\

\noindent {\bf MSC 2010:} 35A23, 52A39, 35J25. \\
{\bf Keywords:}  Symmetrization, Mixed volumes, Anisotropic curvature, Hessian equation, P\'olya-Szeg\H o principle. \\
\end{abstract}

\date{}
\maketitle
\begin{center}
\begin{minipage}{13cm}
\small
\tableofcontents
\end{minipage}
\end{center}

\

\section{Introduction}

The Schwarz symmetrization, introduced by Schwarz \cite{Sch} in 1884, is a symmetrization process which assigns to a given function, a radially symmetric function whose super or sub level sets have the same volume as that of the given function. One of the most important properties of the Schwarz symmetrization is that it diminishes the Dirichlet integral. This is the so-called P\'olya-Szeg\H o principle.
The Schwarz symmetrization plays an important role in proving sharp geometric or analytic inequalities in analysis, geometry and mathematical physics, see e.g. \cite{Bandle, polyaszego}.

Talenti \cite{ta2} introduced, in the planar case, a new symmetrization which preserves the perimeter of the level sets. More precisely, this assigns to a given convex function, a radially symmetric function whose super or sub level sets have the same perimeter as that of the given function. Talenti showed that this new symmetrization diminishes the Monge-Amp\`ere functional in two dimensions. Later, Tso \cite{Tso} generalized Talenti's symmetrization to any dimensions.  Moreover, he introduced similar symmetrizations with respect to quermassintegrals or curvature integrals, which turn out to diminish the Hessian integrals.

To be precise, for an integer $1\le k\le n$, given a bounded convex domain $\O  \subset\R^{n}$ and a strictly convex function $u$ with $u|_{\p \O}=0$, Talenti-Tso's $(k-1)$-th symmetrand $u^{\ast}_{k-1}$ of $u$ is defined by
\begin{equation*}
\label{tt-s}
u^{\ast}_{k-1}(x)=\sup\left\{t\leq 0: W_{k-1}(\{u\le t\})\le \o_n |x|^{n+1-k}\right\} 
\end{equation*}
where $W_{k-1}$ is the $(k-1)$-quermassintegral functional and $\o_n$ is the volume of the unit ball.
The case $k=1$ reduces to the classical Schwarz symmetrization.
The Hessian integral is defined by 
\begin{equation}
\label{hess-integ}
I_{k}[u,\Omega]=\int_{\Omega}(-u) S_{k}(\n^2 u)\,dx,
\end{equation}
where $(\n^2 u)$ is the Hessian matrix of $u$ and $S_k$ denotes the $k$-th elementary symmetric function on the eigenvalues of $(\n^2 u)$.
Talenti \cite{ta2} and Tso \cite{Tso} showed the following P\'olya-Szeg\H o type principle for Hessian integrals under Talenti-Tso's symmetrization: for a strictly convex function $u$ on a convex domain $\O$,
\begin{equation*}
\label{psi}
I_{k}[u,\Omega]\ge I_{k}[u^{\ast}_{k-1}, \Omega^{\ast}_{k-1}],
\end{equation*}
where $\Omega^{\ast}_{k-1}$ is a ball having the same $(k-1)$-th quermassintegral as $\O$. 

As a direct application of Talenti-Tso's symmetrization, a priori estimates for solutions of the Dirichlet problem for Hessian equation can be derived, see \cite{ta2, Tso} for details. See also Trudinger \cite{tr2}, which generalized Talenti-Tso's symmetrization to functions with some weak convexity condition on non-convex domains. For many other interesting properties and more general results on such kind of symmetrizations, we refer also the reader, for example, to \cite{bntpoin,bt07,bt07bis,dpg2,dpg3,ga09,sa12} and to the references therein.

In another direction, Alvino et.al.\cite{aflt} introduced a generalized Schwarz symmetrization, which they called convex symmetrization, so that the super or sub level sets of the resulting function after this symmetrization are Wulff balls, with respect to a suitable norm $F$. Analog that the Schwarz symmetrization decreases the Dirichlet integral, the convex symmetrization diminishes the anisotropic Dirichlet integral with respect to $F$.
Motivated by \cite{aflt} and \cite{ta2}, the first two authors studied in \cite{dpgmaan} a symmetrization with respect to the anisotropic perimeter in two dimensions and they proved that this symmetrization decreases the anisotropic Monge-Amp\`ere functional. 
In view of Tso's result \cite{Tso}, one may naturally ask whether this result holds for any dimensions, and more generally, whether a similar symmetrization with respect to the anisotropic curvature integral diminishes a corresponding anisotropic Hessian integral. This is the main objective of this paper.

Let $F\in C^{3}(\R^{n}\setminus\{0\})$ be a strongly convex norm on $\R^n$. Let $\O\subset \rr^n$ be a bounded convex domain with $C^{2}$ boundary and $u\in C^2(\O)$ be a quasi-convex function with $u|_{\p \O}=0$. Here a quasi-convex function means a function whose {sub}level sets are convex.
The anisotropic $(k-1)$-th symmetrand of $u$ with respect to $F$ is defined by
\begin{equation*}
	\sym {k-1,F}(x)=\sup\left\{t: W_{k-1,F}(\{u\le t\})\le \k_n \left(F^{o}(x)\right)^{n+1-k}\right\},
\end{equation*}
where $W_{k-1,F}$ is the $(k-1)$-th mixed volumes or anisotropic curvature integral with respect to $F$ and $\k_n=|\mathcal W|$ is the volume of the unit Wulff ball and $F^o$ be the dual norm of $F$. 

Mixed volume is a fundamental and important concept in the theory of convex bodies. It appears as coefficients in the Steiner formula which is an expansion of  volume functional on Minkowski's sum for two convex bodies, see \cite{Sch}. The classical Alexandrov-Fenchel inequality in the theory of convex bodies implies an isoperimetric type inequality between mixed volumes, which will be a crucial ingredient to study this symmetrization. For details we refer to Section 3.1 below.

The anisotropic $k$-Hessian integral with respect to $F$ is given by
\[
I_{k,F}[u,\Omega]=\int_{\Omega} (-u)S_{k,F}[u] dx,
\]
where $S_{k,F}[u]$ is the anisotropic $k$-Hessian operator defined as 
\[
S_{k,F}[u]=S_{k}(A_{F}[u]),
\]
where $(A_{F}[u])$ is the anisotropic Hessian matrix given by
\[
{A_{F}}_{ij}[u]=\de_{x_j}\left[ \de_{\xi_{i}}\left(\frac{1}{2}F^{2}\right)\left(\nabla u\right) \right].
\]

When $k=1$, $S_{1,F}$ reduces to the so-called Finsler-Laplacian or anisotropic Laplacian. In recent decades, there are intensive studies on this operator and the corresponding partial differential equations, see for example \cite{BC, CS, CFV, dgmana, WX, WX2}.

The present paper seems to be a first attempt to study  the anisotropic $k$-Hessian operator $S_{k,F}$ for general $k$. We remark that Cianchi-Salani \cite{CS} has proved some basic properties for $S_{2, F}$, in order to study overdetermined problem for $S_{1,F}$, the Finsler-Laplacian.

The main result of this paper is the following P\'olya-Szeg\H o type principle for the anisotropic Hessian integral (see Theorem \ref{pol})
\[
I_{k,F}[u,\Omega]\ge I_{k,F}[u^{*}_{k-1,F},\Omega^{*}_{k-1,F}].
\]


On one hand, when $F$ is the Euclidean norm, $A_F[u]=\n^2 u$ is the Hessian of $u$, $S_{k,F}[u]=S_k(\n^2 u)$ is the $k$-Hessian operator,  $I_{k,F}[u, \O]$ is the $k$-Hessian integral defined in \eqref{hess-integ}, and the above result reduces to Talenti-Tso's. On the other hand, when $k=1$, $S_{1,F}$ reduces to the so-called Finsler Laplacian and the above result reduces to Alvino et al.'s. The special case $n=k=2$ has been proved by the first two authors \cite{dpgmaan}. Our result completes this anisotropic type symmetrization.

For the Euclidean norm, the study of Talenti-Tso's symmetrization and the corresponding P\'olya-Szeg\H o type principle is based on Reilly's systematic work \cite{Reilly} on the elementary symmetric functions $S_k$ on symmetric matrices. 
Compared to the case when $F$ is the Euclidean norm, the new difficulty arises because of the anisotropy, which makes the anisotropic Hessian $A_F[u]$ a non-symmetric matrix. 
In the case $k=1$, the Finsler Laplacian is a quasilinear operator and in the case $n=k=2$, the 2-dimensional anisotropic Monge-Amp\`ere operator is possible to be computed directly. Here we complete such kind of symmetrization for any $k$ in any dimensions.
To achieve this, we have to make a systematic study of the elementary symmetric functions $S_k$ on non-symmetric matrices, which would be of independent interest. {In particular, we present a detailed proof of an important identity \eqref{difference} in Proposition \ref{prop-skij} for non-symmetric matrices by combinatoric technique.} See Section 2.1 below.  This is one of the main ingredients in our proof. 

The second main ingredient is to establish a relationship between the anisotropic curvature of the level sets and the anisotropic Hessian operators. For the Euclidean norm, such relation has been established by Tso \cite{Tso} and plays an important role in dealing with the Hessian operator. For the general norm, Wang and the third author \cite{WX} have established the relation between the anisotropic mean curvature and the Finsler Laplacian. In this paper we complete all other cases, see Section 2.4 below.

The rest of this paper is organized as follows.  Section 2 is devoted to make preparation for the symmetrizations. We first study the elementary symmetric functions on non-symmetric matrices  and the anisotropic Hessian operator intensively.  We then establish a relation between the anisotropic Hessian operator and the anisotropic curvature of the level sets, which might be of independent interest (see Theorem \ref{thm-curv}). The anisotropic Hessian operator on anisotropic radial functions is also studied (see Proposition \ref{skrad}).  In Section 3, we first review basic properties of mixed volumes and then define the anisotropic symmetrization. The P\'olya-Szeg\H o type principle is proved. Finally,  in Section 4, we make further applications of our symmetrizations to a comparison result for anisotropic Hessian equations and sharp Sobolev-type inequalities.




\

{
\noindent{\bf Acknowledgements.} We would like to thank the anonymous referees for their careful reading and critical comments which help to improve the exposition of this paper.
}

\

\section{Anisotropic Hessian operator}
\subsection{Preliminaries on anisotropy}\

Let $F\in C^{3}(\R^{n}\setminus\{0\})$ be a {\it strongly convex norm} on $\R^n$, in the sense that
\begin{itemize}
\item[(i)] $F$ is a norm in $\mathbb{R}^{n}$, i.e.,  $F$ is a convex, $1$-homogeneous function satisfying $F(x)>0$ when $x\neq 0$ and $F(0)=0$;
\item[(ii)] $F$ satisfies a uniformly elliptic condition: $\n^2 (\frac12 F^2)$ is positive definite in $\mathbb{R}^{n}\setminus \{0\}$.
\end{itemize} 
The {\it polar function} $F^o\colon \Rn \rightarrow [0,+\infty[$ 
of $F$ is defined as
\begin{equation*}
F^o(x)=\sup_{\xi\ne 0} \frac{\langle \xi, x\rangle}{F(\xi)}.
\end{equation*}
Then { $F^o\in C^{2}(\R^{n}\setminus\{0\})$} and it is also a strongly convex norm on $\R^{n}$ (see \cite{schn}).
Furthermore, 
\begin{equation*}
F(\xi)=\sup_{x\ne 0} \frac{\langle \xi, x\rangle}{F^o(x)}.
\end{equation*}
We remark that, throughout this paper,  we use conventionally $\xi$ as the variable for $F$ and $x$ as the variable for $F^o$ and $u$. 

Denote 
\[
\mathcal W= \{  x\in  \Rn \colon F^o(x)< 1 \}. 
\]
We call $\mathcal W$ the {\it unit Wulff ball} centered at the origin, and $\p \mathcal W$ the {\it Wulff shape}. 
More generally, we denote $$\mathcal W_r(x_0)=r\mathcal W+x_0,$$ and call it the {\it Wulff ball of radius $r$ centered at $x_0$}. We simply denote $\mathcal W_r=\mathcal W_r(0)$. 


The following properties of $F$ and $F^o$ hold true (see e.g. \cite{CS}):
for any  $x, \xi \in \Rn\setminus \{0\}$,
\begin{gather*}
\label{prima}
 \langle \n F(\xi) , \xi \rangle= F(\xi), \quad  \langle \n F^{o} (x), x \rangle
= F^{o}(x)
 \\
 \label{seconda} F(\n F^o(x))=F^o(\n F(\xi))=1,\\
\label{terza} 
F^o(x)  \n F(\n F^o(x) ) =x, \quad F(\xi) 
 \n F^o\left(\n F(\xi) \right) = \xi.\\
 {\n^2 F(\xi)\xi=0, \quad \n^3 (\frac12F^2)(\xi)\xi=0.}
 \end{gather*}

\smallskip

\subsection{Invariants on non-symmetric matrices}\

 Let $1\le k\le n$, be an integer. For a $n$-vector $\l=(\l_1,\cdots,\l_n)\in \rr^n$, the $k$-th elementary symmetric function on $\l$ is defined  by 
\[\s_k(\l)=\sum_{1\le i_1<\cdots< i_k\le n} \l_{i_1}\cdots \l_{i_k}.\]

For an $n\times n$ matrix $A=(A_{ij})\in \rr^{n\times n}$, we define $S_k(A)$
 to be the sum of all its principal $k\times k$ minors, namely,
\begin{eqnarray}\label{sk}
&&S_k(A)=\frac{1}{k!}\sum_{1\le i_1, \cdots, i_{k}, j_1,\cdots,j_k\le n}\delta_{i_1\cdots i_k}^{j_1\cdots j_k}A_{i_1j_1}\cdots A_{i_kj_k},
\end{eqnarray}
 where $\delta_{i_1\cdots i_k}^{j_1\cdots j_k}$ is the generalized Kronecker symbol which is defined to be $+1$ (resp. $-1$) if  $i_1,\cdots, i_k$  are distinct and $(j_1,\cdots, j_k)$ is an even (resp. odd) permutation of $(i_1,\cdots, i_k)$ and to be $0$ in any other cases. A basic property for $\delta_{i_1\cdots i_k}^{j_1\cdots j_k}$ is that it is anti-symmetric about any two indices from $\{i_1, \cdots, i_k\}$ or $\{j_1, \cdots, j_k\}$.  We will use the convention that $S_0(A)=1$.
  We remark that we do not assume that $A$ is symmetric. In the case that $A$ is symmetric, the invariant $S_k(A)$ has been intensively investigated by Reilly \cite{Reilly}.

When the eigenvalues $\l_A$ of $A \in \rr^{n\times n}$ are all real, it is clear that $$S_k(A)=\s_k(\l_A).$$ This follows easily from the fact that $\l_A$ are the real roots of the characterization polynomial $$\det(\l I- A)=\sum_{k=0}^n (-1)^kS_k(A)\l^{n-k}=0.$$ 
This is always the case we are interested in. In fact, in this paper, $A$ will be the product of two symmetric matrices, whose eigenvalues are clearly all real. Hence, we will denote $S_k(\l)$ instead of $\s_k(\l)$ as the elementary symmetric function in the following.  We remark that for such family of matrices $A$, some basic properties for $S_{2}(A)$ have been investigated by Cianchi-Salani \cite[Section 3.2]{CS}.

We denote the $(k-1)$-th {\it Newton transformation} by
$$S_k^{ij}(A)=\frac{\p S_k(A)}{\p A_{ij}}.$$ From the definition \eqref{sk}, it is easy to see that
\begin{eqnarray}\label{skij}
&&S_k^{ij}(A)=\frac{1}{(k-1)!}\sum_{{1\le i_1, \cdots, i_{k-1}, j_1,\cdots,j_{k-1}\le n}}\delta_{i_1\cdots i_{k-1} i}^{j_1\cdots j_{k-1} j}A_{i_1j_1}\cdots A_{i_{k-1}j_{k-1}}.\end{eqnarray}
{ In particular, $S_1^{ij}=\delta_{ij}$.}

Let $A_i$, $i=1,\cdots,k$ be $k$ $n\times n$ matrices. The {\it mixed discriminant} of $\{A_i\}$ is defined by
\begin{eqnarray}\label{mix}
S_k(A_1, A_2, \cdots, A_k)=\frac{1}{k!}\sum_{1\le i_1, \cdots, i_{k}, j_1,\cdots,j_k\le n}\delta_{i_1\cdots i_k}^{j_1\cdots j_k}(A_1)_{i_1j_1}\cdots (A_k)_{i_kj_k}.
\end{eqnarray}

We have the following easy properties.
\begin{prop}
It holds that:
\begin{itemize}
\item[(i)] the mixed discriminant $S_k(A_1, A_2, \cdots, A_k)$ is  multilinear and totally symmetric about $A_1$, $A_2, \cdots, A_k$;
\item[(ii)] \begin{eqnarray}\label{mix1}
S_k(A)= S_k(A, A, \cdots, A);
\end{eqnarray}
\item[(iii)] 
\begin{eqnarray}\label{mix2}
S_k(\underbrace{A, \cdots, A}_{k-1}, B)=\frac1k \sum_{1\le i,j\le n}S_k^{ij}(A)B_{ij};
\end{eqnarray}
\item[(iv)] 
\begin{eqnarray}\label{mix3}
S_k(A+B)=\sum_{r=0}^k \binom{k}{r}S_k({\underbrace{A, \cdots, A}_{k-r}, \underbrace{B, \cdots, B}_{r}}).
\end{eqnarray}
\end{itemize}
\end{prop}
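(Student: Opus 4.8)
The plan is to verify each of the four identities directly from the defining formulas \eqref{sk}, \eqref{skij}, and \eqref{mix}, exploiting only multilinearity of the summand and the anti-symmetry of the generalized Kronecker symbol $\delta_{i_1\cdots i_k}^{j_1\cdots j_k}$; crucially, none of these arguments uses symmetry of the matrices, so everything goes through verbatim for non-symmetric $A$, $B$, $A_i$.

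For (i), I would first observe that multilinearity in each slot $A_m$ is immediate, since in \eqref{mix} the entry $(A_m)_{i_m j_m}$ enters linearly. Total symmetry requires a little more care: swapping $A_m$ and $A_\ell$ in the sum amounts to renaming the pair of dummy indices $(i_m,j_m)\leftrightarrow(i_\ell,j_\ell)$, and this simultaneously transposes two indices in the upper row and two in the lower row of $\delta_{i_1\cdots i_k}^{j_1\cdots j_k}$; by the stated anti-symmetry property the symbol picks up a factor $(-1)\cdot(-1)=+1$, so the sum is unchanged. For (ii), setting all $A_m=A$ in \eqref{mix} reproduces \eqref{sk} term by term. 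For (iii), I would start from the right-hand side: using \eqref{skij}, $\sum_{i,j}S_k^{ij}(A)B_{ij}$ equals $\frac{1}{(k-1)!}\sum \delta_{i_1\cdots i_{k-1}i}^{j_1\cdots j_{k-1}j}A_{i_1j_1}\cdots A_{i_{k-1}j_{k-1}}B_{ij}$; relabel the pair $(i,j)$ as $(i_k,j_k)$ and compare with \eqref{mix} evaluated at $(\underbrace{A,\dots,A}_{k-1},B)$, which carries the prefactor $\frac1{k!}$, to read off the factor $\frac1k$. For (iv), I would expand $S_k(A+B)=S_k(\underbrace{A+B,\dots,A+B}_{k})$ via \eqref{mix1} and then use multilinearity (part (i)) to distribute the sum over all $2^k$ choices of $A$ or $B$ in each slot; total symmetry (part (i) again) shows that the $\binom{k}{r}$ terms with exactly $r$ copies of $B$ all coincide, yielding \eqref{mix3}.

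I do not anticipate a genuine obstacle here; the proposition is a bookkeeping exercise. The one point deserving attention — and the only place where the non-symmetric setting could in principle cause trouble — is the symmetry claim in (i): one must make sure that transposing two columns of the $A$-indices together with the two columns of the $j$-indices is exactly the operation realized by permuting the matrix arguments, and that the two sign changes from $\delta$ cancel. Once that is checked, (ii)–(iv) are formal consequences. I would therefore present the proof of (i) in a line or two of index manipulation and then dispatch (ii)–(iv) briefly, noting explicitly that symmetry of the matrices is nowhere used, since this is the feature that makes the whole section applicable to the anisotropic Hessian $A_F[u]$.
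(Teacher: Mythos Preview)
Your proposal is correct and takes essentially the same approach as the paper, which simply states that the identities follow from the definitions \eqref{sk}, \eqref{skij} and \eqref{mix}. Your write-up supplies the explicit index manipulations that the paper leaves implicit, including the key observation that swapping two matrix arguments corresponds to transposing two upper and two lower indices of $\delta_{i_1\cdots i_k}^{j_1\cdots j_k}$, producing a net sign of $+1$; this is exactly the bookkeeping the authors have in mind.
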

\begin{proof} They follow from the definitions \eqref{sk}, \eqref{skij} and \eqref{mix}.
\end{proof}

Next, we prove a crucial identity for the Newton transformation $S_k^{ij}(A)$.
\begin{prop} 
\label{prop-skij}
Let $k=0, 1,\cdots, n-1$. For an $n\times n$ matrix $A=(A_{ij})$, we have
\begin{eqnarray}
\label{difference}
&&S_{k+1}^{ij}(A)= S_{k}(A)\d_{ij}- \sum_l S_{k}^{il}(A)A_{jl}, \quad \forall\quad i, j=1,\cdots, n.
\end{eqnarray}
\end{prop}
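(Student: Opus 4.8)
The plan is to work directly from the definitions \eqref{sk} and \eqref{skij}, expanding $S_{k-1}^{il}(A)A_{jl}$ and comparing it with $S_k^{ij}(A)$ via a Laplace-type expansion of the generalized Kronecker symbol along one row/column pair. Concretely, I would start from
\[
S_{k}^{ij}(A)=\frac{1}{(k-1)!}\sum \delta_{i_1\cdots i_{k-1} i}^{j_1\cdots j_{k-1} j}A_{i_1j_1}\cdots A_{i_{k-1}j_{k-1}},
\]
and use the cofactor-style identity for the generalized Kronecker delta on $k$ upper and $k$ lower indices, namely
\[
\delta_{i_1\cdots i_{k-1} i}^{j_1\cdots j_{k-1} j}
=\d_{ij}\,\delta_{i_1\cdots i_{k-1}}^{j_1\cdots j_{k-1}}
-\sum_{m=1}^{k-1}\d_{i j_m}\,\delta_{i_1\cdots i_{k-1}}^{j_1\cdots j_{m-1}\, j\, j_{m+1}\cdots j_{k-1}},
\]
which expresses the determinant of the $(k)\times(k)$ "index incidence" structure by expansion along the last column. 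Substituting this identity into the sum for $S_k^{ij}(A)$ splits it into the term $S_{k-1}(A)\d_{ij}$ and a remaining sum of $k-1$ identical terms (after relabeling the dummy indices $i_m,j_m$), each of which, divided by the $(k-1)!$, reassembles into $\sum_l S_{k-1}^{il}(A)A_{jl}$ up to the combinatorial factor. The bookkeeping of the $(k-1)$ equal terms against the factorials $(k-1)!$ versus $(k-2)!$ is what produces the correct coefficient $1$ in front of the sum.

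Alternatively — and this may be cleaner to write — I would use the mixed-discriminant machinery already set up: by \eqref{mix1} and \eqref{mix2}, $S_k^{ij}(A)$ is $k$ times the mixed discriminant $S_k(A,\dots,A,E_{ij})$ where $E_{ij}$ is the matrix with a single $1$ in position $(i,j)$; then expand the antisymmetrized sum defining this mixed discriminant along the slot occupied by $E_{ij}$, noting that $(E_{ij})_{i_kj_k}=\d_{i i_k}\d_{j j_k}$ collapses two summation indices. One of the resulting pieces (where the collapsed pair sits "diagonally") gives $\d_{ij}S_{k-1}(A)$, and the cross terms, after using antisymmetry of $\delta$ in the lower indices to move $j$ into a generic slot, give exactly $-\sum_l S_{k-1}^{il}(A)A_{jl}$. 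Both routes are essentially the same computation; the second keeps the symmetry of the objects more visible.

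The main obstacle is purely combinatorial: tracking the permutation signs and the multiplicity of equal terms when one "peels off" a row–column pair from the generalized Kronecker symbol, and checking that no symmetry of $A$ is secretly being used (the identity must hold for non-symmetric $A$, which is the whole point — note in particular that the sum is $\sum_l S_{k-1}^{il}(A)A_{jl}$, i.e., $A$ contracted on its second index with the second index of the Newton tensor, and the asymmetry of this contraction is essential). I would double-check the identity on the base case $k=1$, where it reads $S_1^{ij}(A)=\d_{ij}$ (since $S_0=1$ and $S_0^{il}=0$), consistent with $S_1(A)=\tr A$; and on $k=2$, where it should reproduce the known formula $S_2^{ij}(A)=S_1(A)\d_{ij}-A_{ji}$, matching Cianchi–Salani \cite[Section 3.2]{CS}. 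With the base case and the peeling identity in hand, an induction on $k$ closes the argument, or equivalently the single Laplace expansion does it in one step.
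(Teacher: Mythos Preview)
Your approach is correct in spirit and considerably more economical than the paper's. The paper proceeds by splitting into the cases $i=j$ and $i\neq j$; in each case it expands $S_{k+1}^{ij}(A)$ and $\sum_l S_k^{il}(A)A_{jl}$ directly from \eqref{skij}, separates the sums according to which dummy indices happen to coincide with $i$ (and, in Case II, with $j$), and matches the resulting pieces term by term --- in Case II this requires tracking eight sub-sums $I\!V_{31},\ldots,I\!V_{38}$ and verifying several pairwise cancellations. Your single Laplace expansion of the generalized Kronecker symbol handles both cases at once and makes it transparent that no symmetry of $A$ is invoked.

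One slip, however: the identity you display, with $\delta_{ij_m}$, is the expansion of $\delta_{i_1\cdots i_{k-1} i}^{j_1\cdots j_{k-1} j}$ along the \emph{row} indexed by $i$, not the column. Carrying it through as you describe, the surviving $A$-factor is $A_{i_m i}$ (first index summed, second index the fixed $i$), and the pieces reassemble into $-\sum_l S_{k-1}^{lj}(A)A_{li}$ rather than $-\sum_l S_{k-1}^{il}(A)A_{jl}$. That is a true identity, but it is the transposed companion of \eqref{difference}, not \eqref{difference} itself --- and for non-symmetric $A$ the two are genuinely different expressions. To obtain the stated formula you want the column expansion
\[
\delta_{i_1\cdots i_{k-1} i}^{j_1\cdots j_{k-1} j}
=\delta_{ij}\,\delta_{i_1\cdots i_{k-1}}^{j_1\cdots j_{k-1}}
-\sum_{m=1}^{k-1}\delta_{i_m j}\,\delta_{i_1\cdots i_{m-1}\, i\, i_{m+1}\cdots i_{k-1}}^{j_1\cdots j_{k-1}},
\]
after which $\delta_{i_m j}$ forces $A_{i_m j_m}=A_{j j_m}$; renaming $j_m=l$ and using the factorial bookkeeping you outlined, the $k-1$ equal terms combine exactly into $-\sum_l S_{k-1}^{il}(A)A_{jl}$. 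With this adjustment your argument is complete and your $k=1,2$ sanity checks go through unchanged.
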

\begin{proof} This is well-known and easy for symmetric matrices, see for example \cite[Proposition 1.2 (d)]{Reilly}. 
Now we prove it for general (non-symmetric) matrices.

\noindent{\textbf{Case I: $i=j$.}} 

In the following computation, we fix $i$. First, from \eqref{skij}, we have
\begin{eqnarray}\label{eq1}
S_{k+1}^{ii}(A)&=& \frac{1}{k!}\sum_{\substack{i_1, \cdots i_{k}\neq i
\\ j_1, \cdots j_{k}\neq i}}\delta_{i_1\cdots i_{k}}^{j_1\cdots j_{k}}A_{i_1j_1}\cdots A_{i_{k}j_{k}}\nonumber
\\&=&\frac{1}{k!}\sum_{\substack{i_1, \cdots i_{k}
\\ j_1, \cdots j_{k}}}\delta_{i_1\cdots i_{k}}^{j_1\cdots j_{k}}A_{i_1j_1}\cdots A_{i_{k}j_{k}} -\frac{1}{k!} k\sum_{\substack{i_2, \cdots i_{k}\neq i\\ j_2, \cdots j_{k}\neq i}} \delta_{i_2\cdots i_{k}}^{j_2\cdots j_{k}}A_{ii}A_{i_2j_2}\cdots A_{i_{k}j_{k}}\nonumber
\\&&-\frac{1}{k!} k(k-1)(-1)\sum_{\substack{i_2, i_3,\cdots i_{k}\neq i\\ j_1, j_3,\cdots j_{k}\neq i}}\delta_{i_2i_3\cdots i_{k}}^{j_1j_3\cdots j_{k}}A_{ij_1}A_{i_2 i}A_{i_3j_3}\cdots A_{i_{k}j_{k}}\nonumber
\\&=&S_k(A)+ I_1+ I_2.
\end{eqnarray}
{ In \eqref{eq1}, we have separated the summation $\sum\limits_{\substack{i_1, \cdots i_{k}
\\ j_1, \cdots j_{k}}}$ to be three categories,
\begin{eqnarray*}
&&\mathcal{I}_0=\{i_1, \cdots i_{k}\neq i, j_1, \cdots j_{k}\neq i\},\\ 
&&\mathcal{I}_1=\{i_l=j_l=i \hbox{ for some }l\},\\ 
&&\mathcal{I}_2= \{i_l=j_m=i \hbox{ for some }l\ne m\}.
\end{eqnarray*}
The summation for $\mathcal{I}_0$ yields $S_{k+1}^{ii}(A)$, while, the summation for $\mathcal{I}_1$ and $\mathcal{I}_2$ yields $I_1$ and  $I_2$ respectively.}

Second, again from \eqref{skij}, we have
\begin{eqnarray}\label{eq2}
\sum_l S_{k}^{il}(A)A_{il}&=& \frac{1}{(k-1)!}\sum_{\substack{l\\i_2, \cdots i_{k}
\\ j_2, \cdots j_{k}}}\delta_{i i_2\cdots i_{k}}^{l j_2\cdots j_{k}}A_{i_2j_2}\cdots A_{i_{k}j_{k}} A_{il}\nonumber
\\&=&\frac{1}{(k-1)!}\sum_{\substack{i_2, \cdots i_{k}\neq i
\\ j_2, \cdots j_{k}\neq i}}\delta_{i_2 \cdots i_{k}}^{j_2\cdots j_{k}}A_{i_2 j_2}A_{i_3j_3}\cdots A_{i_{k}j_{k}} A_{ii}\nonumber
\\&&+\frac{1}{(k-1)!}\sum_{\substack{i_2, i_3,\cdots i_{k}\neq i
\\ l,\ j_3, \cdots j_{k}\neq i}}(k-1)(-1)\delta_{i_2 i_3\cdots i_{k}}^{l\ j_3\cdots j_{k}}A_{i_2 i}A_{i_3j_3}\cdots A_{i_{k}j_{k}} A_{il}\nonumber
\\&=&I\!I_1+I\!I_2.
\end{eqnarray}
{ Note that for the summation $\sum\limits_{\substack{l\\i_2, \cdots i_{k}
\\ j_2, \cdots j_{k}}}\delta_{i i_2\cdots i_{k}}^{l j_2\cdots j_{k}}$, there is exactly one of $\{l, j_2\cdots j_{k}\}$ equal to $i$ (otherwise, the Kronecker symbol would be $0$).
Hence in \eqref{eq2}, we have separated the indices set  to be $\{l=i\}$ and $\{l\ne i\}$. In the case  $\{l=i\}$, other indices $i_2, \cdots i_{k},
j_2, \cdots j_{k}$ cannot be $i$, for which the summation yields $I\!I_1$, while in the case $\{l\ne i\}$, exactly one of $j_2\cdots j_{k}$ must be $i$, for which the summation yields $I\!I_2$.}

It is direct to see that $I_1=-I\!I_1$ and $I_2=-I\!I_2$. It follows from \eqref{eq1} and \eqref{eq2} that
\begin{eqnarray}\label{eqii}
&&S_{k+1}^{ii}(A)= S_k(A)- \sum_l S_{k}^{il}(A)A_{il}.
\end{eqnarray}
\noindent{\textbf{Case II: $i\neq j$.}} 

In the following computation, we fix $i$ and $j$.
First, from \eqref{skij}, we have
\begin{eqnarray}\label{eq3}
S_{k+1}^{ij}(A)&=& \frac{1}{k!}\sum_{\substack{i_1, \cdots i_{k}
\\ j_1, \cdots j_{k}}}\delta_{i i_1\cdots i_{k}}^{j j_1\cdots j_{k}}A_{i_1j_1}\cdots A_{i_{k}j_{k}}\nonumber
\\&=&\frac{1}{k!}k(-1)\sum_{\substack{i_2, \cdots i_{k}\neq i, j
\\ j_2, \cdots j_{k}\neq i, j}}\delta_{i_2\cdots i_{k}}^{j_2\cdots j_{k}}A_{ji}A_{i_2j_2}\cdots A_{i_{k}j_{k}} \nonumber\\&&+\frac{1}{k!} k(k-1)\sum_{\substack{i_2, \cdots i_{k}\neq i, j\\ j_2, \cdots j_{k}\neq i, j}} \delta_{i_2i_3\cdots i_{k}}^{j_1j_3\cdots j_{k}}A_{j j_1}A_{i_2 i}A_{i_3j_3}\cdots A_{i_{k}j_{k}}\nonumber
\\&=& I\!I\!I_1+ I\!I\!I_2.
\end{eqnarray}
{ Note that for the summation $\sum\limits_{\substack{i_1, \cdots i_{k}
\\ j_1, \cdots j_{k}}}\delta_{i i_1\cdots i_{k}}^{j j_1\cdots j_{k}}$, there are exactly one of $\{j_1, j_2\cdots j_{k}\}$ equal to $i$ and  exactly one of $\{i_1, i_2\cdots i_{k}\}$ equal to $j$ (Otherwise, the Kronecker symbol would be $0$).
. Hence in \eqref{eq3} we have separated the index set to be two categories:
\begin{eqnarray*}
&&\mathcal{III}_1=\{j_l=i, i_l=j, \hbox{ for some }l\},\\
&&\mathcal{III}_2=\{j_l=i, i_m=j, \hbox{ for some }l\neq m\}.
\end{eqnarray*}
The summation for $\mathcal{III}_1$ and $\mathcal{III}_2$ yields $ I\!I\!I_1$ and  $I\!I\!I_2$ respectively.}

Second, again from \eqref{skij}, we have
{ \begin{eqnarray}\label{eq4}
\sum_l S_{k}^{il}(A)A_{jl}&=& \frac{1}{(k-1)!}\sum_{\substack{l\\i_2, \cdots i_{k}
\\ j_2, \cdots j_{k}}}\delta_{i i_2\cdots i_{k}}^{l j_2\cdots j_{k}}A_{i_2j_2}\cdots A_{i_{k}j_{k}} A_{jl}\nonumber
\\&=&\frac{1}{(k-1)!}\sum_{\substack{i_2, \cdots i_{k}\neq i
\\ j_2, \cdots j_{k}\neq i}}\delta_{i_2 \cdots i_{k}}^{j_2\cdots j_{k}}A_{i_2 j_2}A_{i_3j_3}\cdots A_{i_{k}j_{k}} A_{ji}\nonumber
\\&&+\frac{1}{(k-1)!}\sum_{\substack{i_2, i_3,\cdots i_{k}\neq i
\\ l,\ j_3, \cdots j_{k}\neq i}}(k-1)(-1)\delta_{i_2 i_3\cdots i_{k}}^{l\ j_3\cdots j_{k}}A_{i_2 i}A_{i_3j_3}\cdots A_{i_{k}j_{k}} A_{jl}\nonumber
\\&=&\frac{1}{(k-1)!}\sum_{\substack{i_2, \cdots i_{k}\neq i
\\ j_2, \cdots j_{k}\neq i}}\delta_{i_2 \cdots i_{k}}^{j_2\cdots j_{k}}A_{i_2 j_2}A_{i_3j_3}\cdots A_{i_{k}j_{k}} A_{ji}\nonumber
\\&&+\frac{1}{(k-1)!}\sum_{\substack{i_2, i_3,\cdots i_{k}\neq i
\\ j_2, j_3, \cdots j_{k}\neq i}}(k-1)(-1)\delta_{i_2 i_3\cdots i_{k}}^{j_2 j_3\cdots j_{k}}A_{i_2 i}A_{i_3j_3}\cdots A_{i_{k}j_{k}} A_{jj_2}.
\end{eqnarray}
The separation  for the indices set in \eqref{eq4} is the same as that in \eqref{eq2}. Next we separate the indices set 
$\{i_2, \cdots i_{k}, j_2, \cdots j_{k}\neq i\}$ to be \begin{eqnarray*}
&&\{i_2, \cdots i_{k}, j_2, \cdots j_{k}\neq i, j\},\\
&&\{i_2, \cdots i_{k}, j_2,\cdots, j_k\neq i,\hbox{ and one of these indices is } j\}
\end{eqnarray*}
so that we get from \eqref{eq4}
\begin{eqnarray}\label{eq5}
\sum_l S_{k}^{il}(A)A_{jl}&=&\frac{1}{(k-1)!}\sum_{\substack{i_2, \cdots i_{k}\neq i, j
\\ j_2, \cdots j_{k}\neq i, j}}\delta_{i_2 \cdots i_{k}}^{j_2\cdots j_{k}}A_{i_2 j_2}A_{i_3j_3}\cdots A_{i_{k}j_{k}} A_{ji}\nonumber
\\&&+\frac{1}{(k-1)!}\sum_{\substack{i_2, \cdots i_{k}\neq i, j
\\ j_2, \cdots j_{k}\neq i, j}}(k-1)(-1)\delta_{i_2\cdots i_{k}}^{j_2\cdots j_{k}}A_{i_2 i}A_{i_3j_3}\cdots A_{i_{k}j_{k}} A_{jj_2}\nonumber
\\&&+\frac{1}{(k-1)!}\hspace{-1cm}\sum_{\substack{i_2, \cdots i_{k}, j_2,\cdots, j_k\neq i,
\\[.1cm]  \hbox{\scriptsize{one of these indices is} } j}}\hspace{-1cm} \delta_{i_2\cdots i_{k}}^{j_2\cdots j_{k}}\left[A_{ji}A_{i_2 j_2}- (k-1)A_{i_2 i}A_{jj_2}\right]A_{i_3j_3}\cdots A_{i_{k}j_{k}}\nonumber
\\[.2cm]&=& I\!V_1+ I\!V_2 + I\!V_3.
\end{eqnarray}
}
It is direct to see $I\!I\!I_1 =-I\!V_1$ and $I\!I\!I_2= -I\!V_2$. We claim that
\begin{eqnarray}
I\!V_3= \sum_{\substack{i_2, \cdots i_{k}, j_2,\cdots, j_k\neq i,
\\ \hbox{\scriptsize{one of these indices is} } j}} \delta_{i_2\cdots i_{k}}^{j_2\cdots j_{k}}\left[A_{ji}A_{i_2 j_2}- (k-1)A_{i_2 i}A_{jj_2}\right]A_{i_3j_3}\cdots A_{i_{k}j_{k}} =0.
\end{eqnarray}
If the claim is true, we see from \eqref{eq3} and  \eqref{eq4} that 
\begin{eqnarray}\label{eqij}
&&S_{k+1}^{ij}(A)=- \sum_l S_{k}^{il}(A)A_{jl}, \quad \hbox{ if }i\neq j.
\end{eqnarray}
Next we prove the claim.
We seperate the summation in $I\!V_3$ into the following five cases: 

(i) $i_2=j_2=j$, and the summand gives
\begin{eqnarray*}
\sum_{\substack{i_3, \cdots i_{k}\neq i, j
\\ j_3, \cdots j_{k}\neq i, j}} \delta_{i_3\cdots i_{k}}^{j_3\cdots j_{k}}(-(k-2))A_{jj}A_{ji}A_{i_3j_3}\cdots A_{i_{k}j_{k}}: =I\!V_{31};
\end{eqnarray*}

(ii) $i_2=j$, only one of $j_r=j$ for $r=3,\cdots, k$, and the summand gives
\begin{eqnarray*}
(k-2)\hspace{-.6cm}\sum_{\substack{i_3, i_4, \cdots i_{k}\neq i, j
\\ j_2, j_4, \cdots j_{k}\neq i, j}}(-1) \delta_{i_3 i_4\cdots i_{k}}^{j_2 j_4\cdots j_{k}}(-(k-2))A_{jj_2}A_{ji}A_{i_3j}A_{i_4 j_4}\cdots A_{i_{k}j_{k}}: =I\!V_{32};
\end{eqnarray*}

(iii) $j_2=j$, only one of $i_r=j$ for $r=3,\cdots, k$, and the summand gives
\begin{eqnarray*}
(k-2)\hspace{-.6cm}\sum_{\substack{i_2, i_4, \cdots i_{k}\neq i, j
\\ j_3, j_4, \cdots j_{k}\neq i, j}}(-1) \delta_{i_2 i_4\cdots i_{k}}^{j_3 j_4\cdots j_{k}}\left[A_{ji}A_{i_2 j}- (k-1)A_{i_2 i}A_{jj}\right]A_{jj_3}A_{i_4 j_4}\cdots A_{i_{k}j_{k}}: =I\!V_{33}+I\!V_{34};
\end{eqnarray*}

(iv) $i_r=j_r=j$ for $r=3,\cdots, k$, and the summand gives
\begin{eqnarray*}
(k-2)\hspace{-.6cm}\sum_{\substack{i_2, i_4 \cdots i_{k}\neq i, j
\\ j_2, j_4\cdots j_{k}\neq i, j}} \delta_{i_2 i_4\cdots i_{k}}^{j_2 j_4\cdots j_{k}}\left[A_{ji}A_{i_2 j_2}- (k-1)A_{i_2 i}A_{jj_2}\right]A_{jj}A_{i_4 j_4}\cdots A_{i_{k}j_{k}}: =I\!V_{35}+I\!V_{36};
\end{eqnarray*}

(v) $i_r=j$ and $j_s=j$ for $r, s=3,\cdots, k$ with $r\neq s$, and the summand gives
\begin{eqnarray*}
(k-2)(k-3)\hspace{-.7cm}\sum_{\substack{i_2, i_4, i_5, \cdots i_{k}\neq i, j
\\ j_2, j_3, j_5\cdots j_{k}\neq i, j}}(-1) \delta_{i_2 i_4 i_5\cdots i_{k}}^{j_2 j_3 j_5\cdots j_{k}}\left[A_{ji}A_{i_2 j_2}- (k-1)A_{i_2 i}A_{jj_2}\right]A_{jj_3}A_{i_4j}A_{i_5j_5}\cdots A_{i_{k}j_{k}}\\: =I\!V_{37}+I\!V_{38};
\end{eqnarray*}

That is, \[I\!V_3= \sum_{\a=1}^8 I\!V_{3\a}.\]
It is direct to check that $I\!V_{31}+I\!V_{35}=0$, $I\!V_{32}+I\!V_{33}+ I\!V_{37}=0$ and $I\!V_{34}+  I\!V_{36}=0$. Finally, $I\!V_{38}=0$ since the Kronecker symbol $\delta_{i_2 i_4 i_5\cdots i_{k}}^{j_2 j_3 j_5\cdots j_{k}}$ is anti-symmetric with respect to $j_2$ and $j_3$ while $A_{i_2 i}A_{jj_2}A_{jj_3}A_{i_4j}A_{i_5j_5}\cdots A_{i_{k}j_{k}}$ is symmetric with respect to $j_2$ and $j_3$.
We get the claim that $I\!V_3 =0$ and in turn \eqref{eqij}.

Our assertion follows from \eqref{eqii} and \eqref{eqij}.
\end{proof}

\smallskip

\subsection{Anisotropic Hessian operator}\

Let $F\in C^{3}(\R^{n}\setminus\{0\})$ be a strongly convex norm on $\R^n$. Let $\O\subset \rr^n$ be an open bounded domain and $u\in C^3(\O)$.
We denote by $F_i, F_{ij}, \ldots$  the partial derivatives of $F$ and by $u_i, u_{ij},\ldots$ the partial derivatives of $u$, 
$$F_i=\frac{\p F}{\p \xi_i},  F_{ij}=\frac{\p^2 F}{\p \xi_i\p \xi_j}, \quad u_i=\frac{\p u}{\p x_i}, u_{ij}=\frac{\p^2 u}{\p x_i\p x_j}.$$

Denote by $A_F[u]=((A_F)_{ij}[u])$ the matrix \begin{eqnarray}\label{A_F}
(A_F)_{ij}[u]&:=& \p_{x_j}\left[\p_{\xi_i}\left(\frac12F^2\right)(\n u)\right]=\sum_l\left(\frac12F^2\right)_{il}(\n u) u_{lj}\nonumber\\&=& \sum_lF_i(\n u)F_l(\n u)u_{lj}+F(\n u)F_{il}(\n u)u_{lj}, \hbox{ when }\n u\neq 0.
\end{eqnarray}
We regard $A_F[u]=0$ when $\n u=0$, in the case that $F$ is not the Euclidean norm.

The anisotropic $k$-Hessian operator of $u$ is defined as 
\begin{equation*}
S_{k,F}[u]:=S_{k}(A_F[u]).
\end{equation*}
The corresponding Newton transformation will be written as \[S_{k, F}^{ij}[u]= S_k^{ij}(A_F[u]).\]
Note that in general $S_{k, F}^{ij}[u]$ is not symmetric about $i$ and $j$.

When $F$ is the Euclidean norm, $A_F[u]$ reduces to the Hessian $\n^2 u$ and $S_{k,F}[u]$  reduces to the classical Hessian operator $S_k[u]=S_k(\n^2 u)$. When $k=1$, $S_{k,F}[u]$ reduces to the Finsler-Laplacian $$\De_F u=\div\left(\n \left(\frac12F^2\right)(\n u)\right),$$ which has been widely investigated in recent decades, see e.g. \cite{CS, WX}. For $k=2$, some properties of $S_{2, F}[u]$ have been investigated by \cite{CS}.

For notation simplicity, we omit the subscription $F$ in $A_F[u]$ and $S_{k,F}[u]$, and $(\n u)$ in $F(\n u),$ $F_i(\n u), \ldots$, when no confusion occurs.
We will make an overall study on $S_{k,F}[u]$ based on the properties of $S_k(A)$ for non-symmetric matrices.
We show several point-wise identities about $S_{k,F}[u]$. We remark that these point-wise identities only hold when $\n u\ne 0$.

The first property we shall prove is the following divergence free property.
\begin{prop}\label{divfree} 
It holds that
\begin{eqnarray}
\label{divfreeeq}
\sum_j\p_j S_{k}^{ij}[u]=0, \quad \forall i=1,\ldots, n.
\end{eqnarray}
\end{prop}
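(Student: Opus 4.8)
The plan is to prove the divergence-free property $\sum_j \p_j S_k^{ij}[u]=0$ by reducing it to the classical divergence-free property of Newton tensors of symmetric matrices, combined with the special algebraic structure of the anisotropic Hessian $A_F[u]$. First I would record the decomposition from \eqref{A_F}: writing $p = \n u$, $a_i = F_i(p)$, $G_{il}=F(p)F_{il}(p)$, and $B = (\frac12 F^2)''(p) = a\otimes a + G$ (a symmetric positive-definite matrix depending only on $p$), we have $A_F[u] = B \cdot (\n^2 u)$, a product of two symmetric matrices. The key point is that $B$ is itself an ``anisotropic Hessian of a function of $p$'' — indeed $B_{il} = \p_{p_i}\p_{p_l}(\frac12 F^2)(p)$ — so the $x$-derivatives of the entries of $B$ are governed by third derivatives of $\frac12 F^2$ evaluated at $\n u$ composed with $\n^2 u$, and these are fully symmetric in the relevant indices.

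The main computation will be to differentiate $S_k^{ij}(A_F[u])$ and split $\p_j$ into the part hitting $B$ and the part hitting $\n^2 u$. For this I would use the chain rule $\p_j S_k^{ij}(A) = \sum_{p,q} S_k^{ij,pq}(A)\,\p_j A_{pq}$ together with identity \eqref{difference} to re-express things, or, more cleanly, exploit multilinearity of the mixed discriminant: since $S_k(A) = S_k(B\n^2u, \dots, B\n^2u)$, one can use \eqref{mix2} and \eqref{mix3} to organize $\sum_j \p_j S_k^{ij}[u]$ into terms where at most one factor carries a derivative. The terms where the derivative falls on a factor $\n^2 u$ produce an expression that vanishes by the symmetry $\p_j u_{lm} = \p_m u_{lj}$ (third derivatives of $u$ are totally symmetric) contracted against the antisymmetric Kronecker symbol — this is the mechanism behind the classical Reilly identity \cite[Prop.~1.2]{Reilly}. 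The terms where the derivative falls on a factor $B$ require $\p_j B_{il} = \sum_m (\tfrac12 F^2)_{ilm}(\n u)\, u_{mj}$, which is symmetric in $(i,l,m)$ and in $j\leftrightarrow$ the index it pairs with through $\n^2 u$; again contracting a symmetric quantity against the antisymmetric $\delta^{\cdots}_{\cdots}$ forces cancellation.

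I expect the main obstacle to be bookkeeping: because $A_F[u]$ is genuinely non-symmetric, one must be careful that the various contractions really do pair an antisymmetric Kronecker index against a symmetric expression, and Proposition~\ref{prop-skij} (the non-symmetric version of \eqref{difference}) is exactly the tool that makes this legitimate in the non-symmetric setting. Concretely I would proceed by induction on $k$: the case $k=1$ is the divergence form of the Finsler Laplacian, $\sum_j \p_j (\tfrac12 F^2)_{ij}(\n u) = \p_i(\text{something})$-type identity which holds because $S_1^{ij} = (\tfrac12 F^2)_{ij}(\n u)$ and the mixed third derivatives are symmetric; for the inductive step, apply $\p_j$ to \eqref{difference}, obtaining $\p_j S_k^{ij} = \p_i S_{k-1} - \sum_{l,j}\p_j(S_{k-1}^{il}A_{jl})$, then use the inductive hypothesis $\sum_j \p_j S_{k-1}^{lj}=0$ after suitably relabeling, and the identity $\sum_i S_{k-1}^{il}A_{li} = $ (a multiple of) $S_k$ again, so that the right side telescopes to something that cancels. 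The delicate point is that $S_{k-1}^{il}A_{jl}$ is summed with $j,l$ in positions that are not symmetric, so one needs $\sum_l S_{k-1}^{il}(A)A_{lj}$ versus $\sum_l S_{k-1}^{il}(A)A_{jl}$ to be tracked separately — this is where the full strength of \eqref{difference} for non-symmetric $A$, just established, is used.
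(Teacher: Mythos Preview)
The paper's proof is far more direct than anything in your plan. It just differentiates formula \eqref{skij}: by multilinearity and symmetry among the $k-1$ factors,
\[
\sum_j\p_j S_{k}^{ij}[u]=\frac{1}{(k-2)!}\sum\delta_{i\, i_1\cdots i_{k-1}}^{j\, j_1\cdots j_{k-1}}\bigl(\p_j A_{i_1j_1}[u]\bigr)A_{i_2j_2}[u]\cdots A_{i_{k-1} j_{k-1}}[u],
\]
and then observes, from $A_{i_1 j_1}=\sum_l(\tfrac12 F^2)_{i_1 l}(\n u)\,u_{lj_1}$, that
\[
\p_j A_{i_1 j_1}[u]=\sum_{l,m}(\tfrac12F^2)_{i_1 l m}(\n u)\,u_{mj}\,u_{lj_1}+\sum_l(\tfrac12F^2)_{i_1 l}(\n u)\,u_{lj_1 j}
\]
is \emph{symmetric in $j$ and $j_1$}: the first summand because $(\tfrac12 F^2)_{i_1 l m}$ is symmetric in $l,m$ and $\n^2 u$ is symmetric, the second because third partials of $u$ are symmetric. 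Contracting against the antisymmetric Kronecker symbol in $j,j_1$ gives zero immediately. No splitting $A=B\,\n^2 u$, no induction, and no use of \eqref{difference} is needed.

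Your first two paragraphs do contain this mechanism (symmetric expression against antisymmetric Kronecker), so if you had simply carried that out you would have reproduced the paper's argument. The problem is the ``concrete'' plan you actually commit to --- induction on $k$ via \eqref{difference} --- which has a gap as written. Differentiating \eqref{difference} and summing over $j$ gives
\[
\sum_j\p_j S_k^{ij}=\p_i S_{k-1}-\sum_{l,j}(\p_j S_{k-1}^{il})A_{jl}-\sum_{l,j}S_{k-1}^{il}\,\p_j A_{jl},
\]
and the inductive hypothesis $\sum_j\p_j S_{k-1}^{lj}=0$ does \emph{not} apply to the middle term: there $\p_j$ acts on $S_{k-1}^{il}$, in which $j$ is not an index at all, so no relabeling fixes this. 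To close the induction you would have to relate $\p_i S_{k-1}=\sum_{p,q}S_{k-1}^{pq}\p_i A_{pq}$ to the other two terms, and untangling that amounts to rediscovering the direct $j\leftrightarrow j_1$ symmetry above. Drop the induction and just do the one-line symmetry check.
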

\begin{proof}
From $\eqref{skij}$, we get
\begin{eqnarray}
\sum_j\p_j S_{k}^{ij}[u]\!\!&=&\!\!\frac{1}{(k-1)!}\,(k-1)\sum_{\substack{j\\i_1\cdots j_{k-1}}}\delta_{i i_1\cdots i_{k-1}}^{j j_1\cdots j_{k-1}}\left( \p_j A_{i_1j_1}[u]\right)A_{i_2j_2}[u]\cdots A_{i_{k-1} j_{k-1}}[u]\nonumber
\\&=&\!\!\frac{1}{(k-2)!}\!\sum_{\substack{j\\i_1\cdots j_{k-1}}} \!\!\delta_{i i_1\cdots i_{k-1}}^{j j_1\cdots j_{k-1}} \left[(\frac12F^2)_{i_1 l m}u_{mj} u_{lj_1}+(\frac12F^2)_{i_1 l}u_{lj_1 j}\right]A_{i_2j_2}[u]\cdots A_{i_{k-1} j_{k-1}}[u]\nonumber
\end{eqnarray}
It is easy to see that $(\frac12F^2)_{i_1 l m}u_{mj} u_{lj_1}+(\frac12F^2)_{i_1 l}u_{lj_1 j}$ is symmetric with respect to $j$ and $j_1$.
Since $\delta_{i i_1\cdots i_{k-1}}^{j j_1\cdots j_{k-1}}$ is anti-symmetric with respect to $j$ and $j_1$, we get the assertion.\end{proof}

\begin{prop}\label{skb} Let $A[u]=B[u]+C[u],$ where 
\[
B_{ij}[u]=\sum_lFF_{il}u_{lj}, \quad C_{ij}[u]=\sum_lF_i F_l u_{lj}.
\]
 Then
\begin{eqnarray*}
S_k(B[u])=\frac{1}{F}\sum_{i,j}S_{k+1}^{ij}[u] u_jF_i.
\end{eqnarray*}
\end{prop}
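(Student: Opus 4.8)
The plan is to write $A[u]=B[u]+C[u]$ as in the statement and to exploit two facts: that $C[u]$ is a rank-one matrix, and the Newton-transformation identity \eqref{difference}. Everything below is understood pointwise at a point where $\nabla u\neq 0$, as the statement implicitly requires.

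First I would record the rank-one structure of $C[u]$. Setting $v_j:=\sum_l F_l u_{lj}$, one has $C_{ij}[u]=F_i v_j$, so $C[u]$ has rank one. Consequently any mixed discriminant in which $C[u]$ occurs at least twice vanishes: in the corresponding summand of \eqref{mix}, two of the factors have the form $F_{i_a}v_{j_a}$ and $F_{i_b}v_{j_b}$, so the product $F_{i_a}F_{i_b}$ is symmetric in the pair $i_a,i_b$ while the generalized Kronecker symbol is anti-symmetric in it. Expanding $S_k(B[u])=S_k(A[u]-C[u])$ by \eqref{mix3} and multilinearity, and discarding the terms that involve $C[u]$ at least twice, I obtain
\[
S_k(B[u])=S_k(A[u])-k\,S_k(\underbrace{A[u],\dots,A[u]}_{k-1},C[u])=S_k(A[u])-\sum_{i,j}S_k^{ij}(A[u])\,C_{ij}[u],
\]
where the last equality is \eqref{mix2}.

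Next I would contract the identity \eqref{difference}, used with $k$ replaced by $k+1$ in the form $S_{k+1}^{ij}(A)=S_k(A)\delta_{ij}-\sum_l S_k^{il}(A)A_{jl}$, against the matrix $F_i u_j$. The homogeneity of $F$ gives $\sum_i F_i u_i=F$ and $\sum_j F_{jm}u_j=0$, so from \eqref{A_F},
\[
\sum_j A_{jl}[u]\,u_j=\sum_{j,m}\big(F_jF_m+FF_{jm}\big)u_j u_{ml}=F\sum_m F_m u_{ml}=F\,v_l .
\]
Hence
\[
\sum_{i,j}S_{k+1}^{ij}[u]\,u_j F_i=F\,S_k(A[u])-\sum_{i,l}S_k^{il}(A[u])\,F_i\,(F v_l)=F\Big(S_k(A[u])-\sum_{i,l}S_k^{il}(A[u])\,C_{il}[u]\Big),
\]
and dividing by $F$ and comparing with the previous display yields the claim.

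The genuinely anisotropic inputs are the rank-one collapse of mixed discriminants with repeated $C[u]$ and the reduction $\sum_j A_{jl}[u]u_j=Fv_l$; both are short but use the homogeneity of $F$ in an essential way, and in the Euclidean case they degenerate since $C[u]$ vanishes. The only thing to watch, exactly as already flagged in the text, is that these are pointwise identities which only make sense where $\nabla u\neq 0$. Beyond organizing these two computations cleanly I do not foresee a serious obstacle.
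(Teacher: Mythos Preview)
Your proposal is correct and follows essentially the same route as the paper's proof: both expand $S_k(B[u])=S_k(A[u]-C[u])$ via \eqref{mix3}, kill the terms with two or more copies of $C[u]$ by the anti-symmetry of the Kronecker symbol against the symmetry of $F_{i_{k-1}}F_{i_k}$, and then contract \eqref{difference} (at level $k+1$) with $F_i u_j$ using the homogeneity relations $\sum_i F_i u_i=F$ and $\sum_j F_{jm}u_j=0$ to reduce $\sum_j A_{jl}[u]u_j$ to $F\sum_m F_m u_{ml}$. Your framing of $C[u]$ as a rank-one matrix is a clean way to phrase the vanishing step, but the underlying computation is identical to the paper's equations \eqref{xeq1'}--\eqref{xeq3}.
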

\begin{proof}
Using Proposition \ref{prop-skij}, we have
\begin{eqnarray}\label{xeq1'}
\sum_{i,j}\frac{S_{k+1}^{ij}[u]u_jF_i}{F} &=& \sum_{i,j}\frac1F S_k[u]\d_{ij}u_jF_i-\sum_{i,j,l}\frac1F S_k^{il}[u]A_{jl}[u]u_j F_i \nonumber
\\&=&S_k[u]-\sum_{i,m,l}S_k^{il}[u]F_mu_{ml}F_i\nonumber
\\&=&S_k[u]-\sum_{i,l}S_k^{il}[u]C_{il}[u].
\end{eqnarray}
In the second equality we used $\sum_iF_i u_i =F$ and $\sum_jF_{ij} u_j=0$.

On the other hand, since $B[u]=A[u]-C[u]$, we compute
\begin{align}\label{xeq2}
\hspace{-1.2cm} S_k (B[u])&= S_k(A[u]-C[u])\nonumber
\\&=\sum_{r=0}^k \binom{k}{r} (-1)^r S_k(\underbrace{A[u], \cdots, A[u]}_{k-r}, \underbrace{C[u], \cdots, C[u]}_{r})\nonumber
\\&=S_k[u]- \sum_{i,j}S_k^{ij}[u]C_{ij}[u] + \sum_{r=2}^k \binom{k}{r} (-1)^r S_k(\underbrace{A[u], \cdots, A[u]}_{k-r}, \underbrace{C[u], \cdots, C[u]}_{r}).
\end{align}
We claim 
\begin{eqnarray*}
S_k(\underbrace{A[u], \cdots, A[u]}_{k-r}, \underbrace{C[u], \cdots, C[u]}_{r})=0, \hbox{ for }2\le r\le k.
\end{eqnarray*}
If the claim is true, then the assertion follows from \eqref{xeq1'} and \eqref{xeq2}.

We prove the claim. Using \eqref{mix3}, \eqref{mix2} and the definition of $C[u]$,
\begin{eqnarray}\label{xeq3}
&& S_k (\underbrace{A[u], \cdots, A[u]}_{k-r}, \underbrace{C[u], \cdots, C[u]}_{r})\nonumber
\\&=&\frac{1}{k!}\sum_{\substack{i_1, \cdots i_{k}
\\ j_1, \cdots j_{k}}}\delta_{i_1\cdots i_{k}}^{j_1\cdots j_{k}}A_{i_1j_1}\cdots A_{i_{k-r} j_{k-r}} C_{i_{k-r+1} j_{k-r+1}}\cdots C_{i_k j_k}\nonumber
\\&=&\frac{1}{k!}\sum_{\substack{l, m\\i_1, \cdots i_{k}
\\ j_1, \cdots j_{k}}}\delta_{i_1\cdots i_{k}}^{j_1\cdots j_{k}}A_{i_1j_1}\cdots A_{i_{k-r} j_{k-r}}C_{i_{k-r+1} j_{k-r+1}}\cdots C_{i_{k-2} j_{k-2}}F_{i_{k-1}}F_l u_{l j_{k-1}}F_{i_{k}}F_m u_{m j_{k}}.
\end{eqnarray}
Since $\delta_{i_1\cdots i_{k}}^{j_1\cdots j_{k}}$ is anti-symmetric with respect to $i_{k-1}$ and $i_k$, while 
\[
A_{i_1j_1}\cdots A_{i_{k-r} j_{k-r}} C_{i_{k-r+1} j_{k-r+1}}\cdots C_{i_{k-2} j_{k-2}} F_{i_{k-1}}F_l u_{l j_{k-1}}F_{i_{k}}F_m u_{m j_{k}}
\] is symmetric with respect to $i_{k-1}$ and $i_k$, we conclude that the summation on the right hand side of \eqref{xeq3} is zero.
The proof is completed.
\end{proof}

\smallskip

\subsection{Anisotropic $k$-th mean curvature of level sets}\

Let $M$ be a smooth closed hypersurface in $\R^{n}$ and $\nu$ be the unit Euclidean outer normal of $M$. The anisotropic outer normal of $M$ is defined by
\[
\nu_{F}=\n F(\nu).
\]
The anisotropic principal curvatures $\k_F=(\kappa_{1}^{F},\ldots,\kappa_{n-1}^{F})\in\R^{n-1}$ are  defined as  the eigenvalues of the map
\[
d\nu_{F}\colon T_{p}M \to T_{\nu_{F}(p)}\mathcal W.
\]
For $k=1,\ldots,n$ the anisotropic $k$-th mean curvature of $M$ is $S_{k}(\kappa_{F})$. See e.g. \cite{WX}.

Let $u\in C(\bar \O)\cap C^2(\O)$ and $m=\min u, M=\max u$. Denote
$$\O_t=\{x: u(x)<t\}, \qquad \Sigma_t=\{x: u(x)=t\}, \hbox{ for }t\in[m, M].$$
We call $\S_t$ is {\it non-degenerate} if $\n u\ne 0$ on $\S_t$. 
We shall establish the relation between the anisotropic $k$-th mean curvatures of $\S_t$ and the anisotropic $k$-th Hessian operator.
\begin{thm}\label{thm-curv} Assume $\S_t$ is a non-degenerate level set of $u$. Then the anisotropic $k$-th mean curvature $S_k(\k_F)$ of $\Sigma_t$ 
  satisfies 
\begin{eqnarray}\label{k-curv}
S_k(\k_F)=S_k \left(\sum_lF_{il}u_{lj}\right)=\frac{1}{{F^{k+1}}}\sum_{i,j}{S_{k+1}^{ij}[u]u_j F_i},
\end{eqnarray}
\end{thm}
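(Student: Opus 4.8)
The plan is to prove the two equalities in \eqref{k-curv} separately, moving from right to left. The second equality $\frac{1}{F^{k+1}}\sum_{i,j}S_{k+1}^{ij}[u]u_jF_i = S_k\left(\sum_l F_{il}u_{lj}\right)$ is essentially already in hand: by Proposition \ref{skb}, $\frac1F\sum_{i,j}S_{k+1}^{ij}[u]u_jF_i = S_k(B[u])$ where $B_{ij}[u]=\sum_l FF_{il}u_{lj} = F\sum_l F_{il}u_{lj}$. Since $S_k$ is $k$-homogeneous, $S_k(B[u]) = F^k S_k\left(\sum_l F_{il}u_{lj}\right)$, and dividing by $F$ and then by $F^k$ gives exactly the claimed identity. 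So the real content is the first equality, relating the anisotropic $k$-th mean curvature $S_k(\k_F)$ of $\Sigma_t$ to $S_k\left(\sum_l F_{il}u_{lj}\right)$.

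For that, first I would set up coordinates adapted to the level set: on a non-degenerate $\Sigma_t$ the Euclidean unit outer normal is $\nu = \n u/|\n u|$, and the anisotropic normal is $\nu_F = \n F(\nu)$. The anisotropic principal curvatures $\k_F$ are the eigenvalues of $d\nu_F : T_pM \to T_{\nu_F(p)}\mathcal W$. The standard trick (following \cite{WX}) is to compute $d\nu_F$ as a composition: $\nu_F = \n F(\nu)$, so $d\nu_F = \n^2 F(\nu)\circ d\nu$, where $d\nu$ is the (Euclidean) Weingarten map of $\Sigma_t$ and $\n^2F$ is homogeneous of degree $-1$. Because $F$ is $1$-homogeneous, $\n^2F(\nu)\nu = 0$, so $\n^2F(\nu)$ annihilates the normal direction and maps $T_pM$ to $T_{\nu_F}\mathcal W$; thus the anisotropic Weingarten map is well-defined on the tangent space and its eigenvalues are the $\kappa_i^F$. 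The key point is then to express the Euclidean Weingarten operator of $\Sigma_t$ in terms of $\n^2 u$: on $T_pM$ one has $d\nu = \frac{1}{|\n u|}P\,\n^2 u\, P$ where $P$ is the projection onto $T_pM$ (more precisely the shape operator is $\frac{1}{|\n u|}$ times the restriction of $\n^2 u$ to the tangent hyperplane), so $S_k(\k_F)$ equals $S_k$ of the restriction of $\n^2F(\nu)\cdot\frac{1}{|\n u|}\n^2 u$ to the tangent hyperplane.

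The crux will be to pass from this "restricted to the $(n-1)$-dimensional tangent hyperplane" description to the full $n\times n$ matrix $\sum_l F_{il}u_{lj}$ (with arguments $\n u$, using $F_{il}(\n u) = |\n u|^{-1}F_{il}(\nu)$ by $(-1)$-homogeneity, which accounts for one of the powers of $F$ once one also rescales $\nu$ versus $\n u$ — here I must be careful, since the statement uses $F$-normalization not Euclidean, via the relations $F(\n F^o(x))=1$ etc., so the natural normalization is $F(\n u)$ rather than $|\n u|$). The mechanism: $\sum_l F_{il}(\n u)u_{lj}$ has the vector $\xi=\n u$ in its kernel on the right is false, but $\sum_j F_{il}(\n u)u_{lj}\xi_j$ relates to $F_{il}u_l$-type terms; the useful fact is $\sum_j F_{ij}(\xi)\xi_j = 0$, so $\xi = \n u$ is a null vector of the symmetric matrix $F_{ij}(\n u)$. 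One then shows that the $n\times n$ matrix $M_{ij}:=\sum_l F_{il}(\n u)u_{lj}$ has $0$ as an eigenvalue with eigenvector $\n u$, hence $S_k(M)$ equals $S_k$ of the action of $M$ on the quotient/complementary $(n-1)$-dimensional space, which one identifies with $\frac{1}{F(\n u)}$ times the anisotropic shape operator on $T_pM$ — this identification, including tracking the exact power $F^{-1}$ that distinguishes $S_k\left(\sum_l F_{il}u_{lj}\right)$ (an $n\times n$ object built from $\n u$) from $S_k$ of the geometric Weingarten map (built from the unit normal $\nu_F$), is the delicate bookkeeping step. I expect this normalization/quotient argument — showing $S_k$ of the degenerate $n\times n$ matrix equals $S_k$ of its essential $(n-1)\times(n-1)$ part, and that the latter is exactly $S_k(\k_F)$ — to be the main obstacle; everything else is homogeneity of $F$ and the already-proven Proposition \ref{skb}.
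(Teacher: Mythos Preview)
Your proposal is correct and close in spirit to the paper's, but the execution differs in one useful way. For the second equality you invoke Proposition~\ref{skb} and $k$-homogeneity of $S_k$; this is exactly what the paper does. For the first equality the paper avoids your detour through the Euclidean Weingarten map: since $\n F$ is $0$-homogeneous one has $\nu_F=\n F(\nu)=\n F(\n u)$, so the full Jacobian of $\nu_F$ on $\R^n$ is immediately $\bigl(\sum_l F_{il}(\n u)u_{lj}\bigr)$ with no $|\n u|$ or $F(\n u)$ factors to track. The paper then rotates coordinates so that $\nu(p)=e_n$, writes $\Sigma_t$ locally as a graph $x_n=\varphi(x')$, and computes $d\nu_F$ in these coordinates as $\n^2F(\nu)\cdot\left(\begin{smallmatrix}D^2\varphi&0\\0&0\end{smallmatrix}\right)$; the block-with-zero-column structure makes it transparent that $S_k$ of the $n\times n$ matrix equals $S_k$ of the $(n-1)\times(n-1)$ block, which is then identified with $S_k(\k_F)$. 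Your left-null-vector/quotient argument (using $\sum_i u_iF_{il}(\n u)=0$) is the coordinate-free version of exactly this block reduction, so it works. The ``delicate bookkeeping'' you flag is in fact trivial: the $|\n u|^{-1}$ from the Euclidean shape operator cancels against the $|\n u|$ coming from $\n^2F(\nu)=|\n u|\,\n^2F(\n u)$ by $(-1)$-homogeneity, so there is \emph{no} residual factor of $F(\n u)^{-1}$ between $S_k\bigl(\sum_l F_{il}u_{lj}\bigr)$ and $S_k(\k_F)$---they are equal on the nose, as the statement asserts.
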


\begin{rem}In the case $F$ is the Euclidean norm, \eqref{k-curv} reduces to 
\begin{eqnarray*}
\s_k(\k)=\sum_{i,j}\frac{S_{k+1}^{ij}(\n^2 u)u_iu_j}{|\n u|^{k+2}},
\end{eqnarray*}
which has been proved by Tso \cite[p.100, Equation (6)]{Tso}.
In the case $k=1$, \eqref{k-curv} reduces to 
\[
H_F=\sum_{i,j}F_{ij}u_{ij}=\frac{1}{F}\Bigg(\De_F u-\sum_{i,j}F_iF_j u_{ij}\Bigg)
\]
which has been proved by Wang-Xia \cite[Theorem 3]{WX}.
\end{rem}
\begin{proof}
Fix $p\in \S_t$. The unit normal and the anisotropic normal at $p$ are given by$$\nu=\frac{\n u}{|\n u|}, \quad \nu_F=\n F(\nu)=\n F(\n u)$$ respectively.

Let $\{\varepsilon_i\}_{i=1}^n$ be the canonical orthonormal basis of $\RR^n$. Write \[\nu_F=\sum_i\nu_F^i \varepsilon_i=\sum_i F_i  \varepsilon_i.\]
 $d\nu_F|_p: T_p\RR^n\to T_{\nu(p)} \RR^n$ is a linear transformation given by:
\begin{eqnarray}\label{eigeneq1}
d\nu_F|_p(\ve_j)=\sum_i\p_j(F_i(\n u))|_p \ve_i= \sum_{i,l} F_{il}u_{lj}|_p \ve_i.
\end{eqnarray}
Thus the eigenvalues of $d\nu_F|_p: T_p\RR^n\to T_{\nu_F(p)}\RR^n$ are given by the eigenvalues of matrix $(\sum_l F_{il}u_{lj}|_p)$.

Next, we compute the eigenvalue of $d\nu_F|_p: T_p\RR^n\to T_{\nu(p)}\RR^n$ in another way. 
By rotation of coordinates, we can assume that at $p$, the $x_n$ coordinate axis lies in the direction of $\nu(p)$. In turn, the $x_1$ to $x_{n-1}$ coordinates axes span the tangent space $T_p \Sigma_t$.
In some neighborhood $\mathcal{N}_p$ of $p$, $\Sigma_t$ can be represented by a graph $$x_n=\varphi(x'), \quad x'=(x_1,\cdots, x_{n-1}),$$ where $\varphi\in C^2(T_p \Sigma_t\cap \mathcal{N}_p)$ is such that $$\varphi(p')=0,\quad D\varphi(p')=0.$$
Here $D$ denotes the gradient on $\rr^{n-1}$.

In this local coordinate, the unit normal of $\Sigma_t\cap \mathcal{N}_p$ is given by
\[\nu=\frac{1}{\sqrt{1+|D\varphi|^2}}\left(-D\varphi, 1\right).\]
It is direct to verify that at $p$,
\begin{eqnarray*}
&\p_\a \nu^\b|_p= -\p_\a\p_\b\varphi, \quad &\a, \b=1,\cdots, n-1.\\
&\p_i \nu^j|_p=0, \quad & i=n \hbox{ or }j=n.
\end{eqnarray*}
Thus,
$d\nu|_p: T_p\RR^n\to T_{\nu(p)}\RR^n$ under the basis $\{\p_i\}_{i=1}^n$ is given by the matrix
\begin{equation*}
\left(
\begin{array}{rccl}
-D^2 \varphi|_p & & 0\\
0 & &0
\end{array}
\right).
\end{equation*}
Hence, 
$d\nu_F|_p: T_p\RR^n\to T_{\nu_F(p)}\RR^n$ under the basis $\{\p_i\}_{i=1}^n$ is given by the matrix
\begin{equation*}
 \n^2 F(\nu|_p)\cdot
\left(
\begin{array}{rccl}
-D^2 \varphi|_p & & 0\\
0 & &0
\end{array}
\right).
\end{equation*}
Combining the above two ways of computation, we see
\begin{eqnarray}\label{curv-1}
S_k\left[(\sum_l F_{il}u_{lj}|_p)\right]= S_k\left[ \n^2 F(\nu|_p)\cdot
\left(
\begin{array}{rccl}
-D^2 \varphi|_p & & 0\\
0 & &0\end{array}
\right)
 \right].
\end{eqnarray}

On the other hand, Recall that the anisotropic principal curvatures $\k_F$ are defined as the eigenvalues of $d\nu_F: T_p \Sigma_t\to T_{\nu_F(p)} \mathcal{W}_F$.
Thus for a local frame $\{e_\a\}_{\a=1}^{n-1} \subset T\Sigma_t$,
\[d\nu_F(e_\a)= \sum_{\b,\g}g^{\b\g}\<d\nu_F(e_\a), e_\g\> e_\b= \sum_{\b,\g, \eta}\sum_{i,j}g^{\b\g}F_{ij}(\nu) h_{\a}^{\eta} e_\eta^j e_\g^i e_\b.\]
In particular, at $p$, since $e_\a=\p_\a$, $g_{\a\b}=\d_{\a\b}$ and $h_{\a}^\eta=-\p_\a\p_\eta \varphi$, $e_\a^i =\d_{i \a}$, we have
\[d\nu_F|_p(\p_\a)=\sum_{\b, \eta}F_{\b\eta}(\nu) (-\p_\a\p_\eta \varphi) \p_\b.\]
Thus the eigenvalues of the $(n-1)\times(n-1)$ matrix $(\sum_{\eta}F_{\b\eta}(\nu) (-\p_\a\p_\eta \varphi)|_p)$ are exactly the anisotropic principal curvatures $\k_F$ at $p$.
Therefore, we get
\begin{eqnarray}\label{curv-2}
S_k(\k_F)=S_k\left[\sum_{\eta}F_{\b\eta}(\nu) (-\p_\a\p_\eta \varphi)|_p)\right].
\end{eqnarray}

It follows from \eqref{curv-1} and \eqref{curv-2} that
\begin{eqnarray}\label{eigeneq2}
&&S_k\left[ \n^2 F(\nu)\cdot
\left(
\begin{array}{rccl}
-D^2 \varphi & & 0\\
0 & &0\end{array}
\right)
 \right]=S_k\left[ 
\left(
\begin{array}{rccl}
\left(\sum_{\eta}F_{\b\eta}(\nu)(-\p_\a\p_\eta \varphi)\right) & & 0\\
0 & &0\end{array}
\right)
 \right]=S_k(\k_F).
\end{eqnarray}
The first identity in \eqref{k-curv} is proved. The second identity in \eqref{k-curv} follows easily from Proposition \ref{skb}.
The proof is completed.
\end{proof}


\begin{cor} Assume $\S_t$ is a non-degenerate level set of $u$. Then
\begin{eqnarray}
\label{kcurvformula}
S_{k}[u]= S_{k}(\k_{F})F^{k} +\frac1F\sum_{i,j,l}S_{k}^{ij}[u]F_{i}u_{l}A_{lj}[u].
\end{eqnarray}
\end{cor}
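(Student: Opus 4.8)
The plan is to derive \eqref{kcurvformula} from the key identity \eqref{difference} of Proposition \ref{prop-skij}, applied to the anisotropic Hessian matrix $A=A_F[u]$, together with the curvature formula \eqref{k-curv} of Theorem \ref{thm-curv}. First I would recall the decomposition $A_{lj}[u]=B_{lj}[u]+C_{lj}[u]$ from Proposition \ref{skb}, where $B_{lj}[u]=\sum_m FF_{lm}u_{mj}$ and $C_{lj}[u]=\sum_m F_lF_m u_{mj}$. The object to be understood is $\sum_{i,j,l}S_k^{ij}[u]F_i u_l A_{lj}[u]$. The natural move is to contract \eqref{difference} (with $k$ in place of $k+1$, i.e.\ writing $S_k^{ij}=S_{k-1}\d_{ij}-\sum_l S_{k-1}^{il}A_{jl}$) against $u_i u_j$ or against $F_i$ and a suitable vector; but more directly, I would start from the second equality of \eqref{k-curv}, namely $S_k(\k_F)=\tfrac1{F^{k+1}}\sum_{i,j}S_{k+1}^{ij}[u]u_jF_i$, lower the index using Proposition \ref{prop-skij} applied to $S_{k+1}^{ij}$:
\[
S_{k+1}^{ij}[u]=S_k[u]\,\d_{ij}-\sum_l S_k^{il}[u]A_{jl}[u].
\]

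Plugging this into $\sum_{i,j}S_{k+1}^{ij}[u]u_jF_i$ gives
\[
\sum_{i,j}S_{k+1}^{ij}[u]u_jF_i = S_k[u]\sum_i u_iF_i - \sum_{i,j,l}S_k^{il}[u]A_{jl}[u]u_jF_i.
\]
Now I would use the homogeneity relation $\sum_i u_iF_i=\langle \n u,\n F(\n u)\rangle = F(\n u)=F$ (from the first displayed identity in the anisotropy preliminaries). Hence
\[
F^{k+1}S_k(\k_F)=\sum_{i,j}S_{k+1}^{ij}[u]u_jF_i = F\,S_k[u] - \sum_{i,j,l}S_k^{il}[u]F_i\,u_j A_{jl}[u].
\]
Solving for $S_k[u]$ yields $S_k[u]=F^{k}S_k(\k_F)+\tfrac1F\sum_{i,j,l}S_k^{il}[u]F_i u_j A_{jl}[u]$, which is exactly \eqref{kcurvformula} after relabeling the dummy indices $j\leftrightarrow l$ in the last sum (noting that in \eqref{kcurvformula} the summation index pattern is $S_k^{ij}[u]F_i u_l A_{lj}[u]$, i.e.\ the same expression with $j$ and $l$ swapped).

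The computation is essentially a one-line consequence of Proposition \ref{prop-skij} and Theorem \ref{thm-curv}, so there is no serious obstacle; the only point requiring care is bookkeeping of indices — making sure that the index $j$ being contracted against $u_j$ in $S_{k+1}^{ij}[u]u_j$ is the one that becomes the lower index of $A$ after applying \eqref{difference}, and that the final relabeling matches the stated form \eqref{kcurvformula} verbatim. One should also note that, as remarked before Theorem \ref{thm-curv}, all these identities are pointwise and valid only where $\n u\neq 0$, i.e.\ on the non-degenerate level set $\S_t$, which is exactly the hypothesis of the corollary; and one uses $F=F(\n u)\neq 0$ there to divide. I would present the proof simply as: apply \eqref{difference} to $S_{k+1}^{ij}[u]$, contract with $u_jF_i$, use $\langle\n u,\n F(\n u)\rangle=F$, and compare with \eqref{k-curv}.
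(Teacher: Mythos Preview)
Your proof is correct and follows essentially the same route as the paper's: both apply the recursion \eqref{difference} to $S_{k+1}^{ij}[u]$, contract with $F_i u_j$, use the homogeneity identity $\sum_i F_i u_i=F$, and compare with \eqref{k-curv}. The only cosmetic difference is that the paper starts from the right-hand side of \eqref{kcurvformula} and reduces it to $S_k[u]$, whereas you start from \eqref{k-curv} and solve for $S_k[u]$; the algebraic content is identical, including the final index relabeling you note.
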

\begin{proof}
Using \eqref{k-curv} and \eqref{difference}, we obtain that
\begin{eqnarray*}
&&S_{k}(\kappa_{F})F^{k} +\frac1F\sum_{i,j,l}S_{k}^{ij}[u]F_{i}u_{l}A_{lj}[u]
\\ &=&\frac{1}{F}\sum_{i,j}{S_{k+1}^{ij}[u]u_j F_i} + \frac1F\sum_{i,l}\left( S_{k}[u]\d_{il}-  S_{k+1}^{il}[u]\right)F_{i}u_{l} \\
&=& \frac1FS_k[u]\sum_i F_iu_i =S_k[u].
\end{eqnarray*}
\end{proof}

\begin{prop} Let $u\in C(\bar \O)\cap C^{{n}}(\O)$ and $u=0$ on $\p\O$. Then
\begin{multline}
\label{byparts}
\int_{\O} (-u)S_k[u] \, dx = \frac1k\int_{\O}  \sum_{i,j}S_k^{ij}[u]FF_iu_j \, dx=\frac1k\int_{m}^M \int_{\S_t}S_{k-1}(\k_F)F^{k}(\n u)F(\nu) \, d\mathcal{H}^{n-1} dt.
\end{multline}
\end{prop}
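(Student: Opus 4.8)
The plan is to prove the two equalities in \eqref{byparts} separately, each by a straightforward manipulation of the point-wise identities already established.

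\textbf{First equality.} The starting point is the divergence-free property of the Newton transformation, Proposition \ref{divfree}, namely $\sum_j \p_j S_k^{ij}[u]=0$. I would write $\sum_{i,j} S_k^{ij}[u] (A_F)_{ij}[u] = \sum_{i,j,l} S_k^{ij}[u]\,(\tfrac12 F^2)_{il}(\n u)\, u_{lj}$ and recognize, using \eqref{mix2} together with the homogeneity identity $S_k(A)=S_k(A,\dots,A)$, that $\sum_{i,j}S_k^{ij}[u](A_F)_{ij}[u]=k\,S_k[u]$ (this is Euler's relation for the $k$-homogeneous function $S_k$). Thus $k\int_\O (-u) S_k[u]\,dx = \int_\O (-u)\sum_{i,j}S_k^{ij}[u](A_F)_{ij}[u]\,dx$. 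Now I integrate by parts in the variable $x_j$, moving $\p_{x_j}$ off the factor $(A_F)_{ij}[u]=\p_{x_j}[\p_{\xi_i}(\tfrac12 F^2)(\n u)]$; the boundary term vanishes because $u=0$ on $\p\O$, and the term where the derivative lands on $S_k^{ij}[u]$ vanishes by Proposition \ref{divfree}. This leaves $\int_\O \sum_{i,j}S_k^{ij}[u]\, u_j\, \p_{\xi_i}(\tfrac12 F^2)(\n u)\,dx = \int_\O \sum_{i,j}S_k^{ij}[u]\,u_j\,F F_i\,dx$, using $\p_{\xi_i}(\tfrac12 F^2)=F F_i$. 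Dividing by $k$ gives the first equality. A small technical point is that these identities are point-wise valid only where $\n u\neq 0$; on the degenerate set $A_F[u]=0$ (when $F$ is non-Euclidean) and one checks that both integrands vanish there, so the computation is legitimate — alternatively one localizes to $\{\n u\neq 0\}$ and notes the complement contributes nothing.

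\textbf{Second equality.} Here the key input is the curvature identity \eqref{k-curv} from Theorem \ref{thm-curv}: for a non-degenerate level set, $\sum_{i,j}S_k^{ij}[u]\,u_j F_i = F^{k+1} S_{k-1}(\k_F)$ — more precisely, applying \eqref{k-curv} with $k$ replaced by $k-1$ gives $S_{k-1}(\k_F) = F^{-k}\sum_{i,j}S_k^{ij}[u]u_j F_i$, so that $\tfrac1k\sum_{i,j}S_k^{ij}[u]F F_i u_j = \tfrac1k F^{k+1} S_{k-1}(\k_F)$. Then I apply the coarea formula to $\int_\O \tfrac1k F^{k+1}(\n u) S_{k-1}(\k_F)\,dx$: since $d x = \tfrac{1}{|\n u|}\,d\mathcal H^{n-1}\,dt$ on the slices, and $F(\nu) = F(\n u/|\n u|) = F(\n u)/|\n u|$ by $1$-homogeneity, one has $F^{k+1}(\n u)\cdot \tfrac{1}{|\n u|} = F^{k}(\n u)\cdot \tfrac{F(\n u)}{|\n u|} = F^{k}(\n u) F(\nu)$, which produces exactly $\tfrac1k \int_m^M \int_{\S_t} S_{k-1}(\k_F) F^{k}(\n u) F(\nu)\,d\mathcal H^{n-1}\,dt$. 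By Sard's theorem the level sets $\S_t$ are non-degenerate $C^2$ hypersurfaces for a.e.\ $t$, so \eqref{k-curv} is applicable almost everywhere, which is all the coarea formula needs.

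\textbf{Main obstacle.} The computations are routine; the only genuine care is in handling the set where $\n u = 0$. For the coarea step this is harmless (it is contained in the union of critical level sets $\{u=t\}$ for $t$ in a measure-zero set, by Sard). For the integration-by-parts step one must ensure that the identity $\sum_{i,j}S_k^{ij}[u](A_F)_{ij}[u] = k S_k[u]$ and the divergence-free property extend across $\{\n u = 0\}$ in the sense needed — which they do, because $A_F[u]$, $S_k[u]$ and $S_k^{ij}[u]$ are all continuous ($u\in C^2$) and, in the non-Euclidean case, vanish on $\{\n u=0\}$ by the convention in \eqref{A_F}, while in the Euclidean case the usual Reilly identities apply verbatim. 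I would state this once and then carry out the two main computations cleanly.
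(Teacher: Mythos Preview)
Your proposal is correct and follows essentially the same route as the paper: for the first equality you use Euler's relation $kS_k[u]=\sum_{i,j}S_k^{ij}[u](A_F)_{ij}[u]$, integrate by parts, and invoke the divergence-free property \eqref{divfreeeq}, while for the second you combine \eqref{k-curv} (with $k-1$ in place of $k$) with the coarea formula and the homogeneity identity $F(\nu)=F(\n u)/|\n u|$. The only cosmetic difference is that the paper first records the pointwise divergence identity $kS_k[u]=\sum_{i,j}\p_j\bigl(S_k^{ij}[u]\,FF_i\bigr)$ (which it later reuses) before integrating, whereas you integrate by parts directly; your variant is slightly cleaner in that it does not need to invoke $\sum_l(\tfrac12 F^2)_{ilm}u_l=0$ explicitly.
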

\begin{proof} 
Using \eqref{mix1}, \eqref{mix2} and \eqref{divfreeeq}, we have
\begin{eqnarray}\label{xeq0}
kS_k[u]&=&\sum_{i,j} S_k^{ij}[u]A_{ij}[u]=\sum_{i,j,l}S_k^{ij}[u] (\frac12F^2)_{il}u_{lj}\nonumber
\\&=&\sum_{i,j,l}\left[\p_j\left(S_k^{ij}[u] (\frac12F^2)_{il}u_{l}\right)-  S_k^{ij}[u]\sum_m(\frac12F^2)_{ilm}u_{mj}u_l\right]\nonumber
\\&=&\sum_{i,j}\p_j\left(S_k^{ij}[u] FF_i\right).
\end{eqnarray}
Last equality above holds because $\sum_i F_iu_i=F$, $\sum_lF_{il}u_l=0$ and $\sum_l(\frac12F^2)_{ilm}u_l=0$.

Multiplying \eqref{xeq0} by $(-u)$ and integrating over $\O$, noting that $u=0$ on $\p\O$, we get by integration by parts,
the first identity in \eqref{byparts}.

{Now, by Sard's theorem, $\S_t$ is non-degenerate and a $C^2$ hypersurface in $\rr^n$ for a.e. $t\in [m, M]$.} Hence, using the co-area formula, we have
\begin{eqnarray*}
\int_{\O}  \sum_{i,j}S_k^{ij}[u]FF_iu_j \, dx&=& \int_{m}^M\int_{\S_t}  \sum_{i,j}S_k^{ij}[u]FF_iu_j\frac{1}{|\n u|} \, d\mathcal{H}^{n-1} dt
\\&=&\int_{m}^M \int_{\S_t}S_{k-1}(\k_F)F^{k}(\n u)F(\nu) \, d\mathcal{H}^{n-1} dt.
\end{eqnarray*}
In the last equality we used \eqref{k-curv} and the fact that $\nu=\frac{\n u}{|\n u|}$ on $\S_t$. This is the second identity in \eqref{byparts}.
The proof is completed.
\end{proof}

\subsection{Anisotropic radial functions}\

In this subsection,  we compute  the $k$-Hessian anisotropic operator for anisotropic radial functions, namely, functions which are symmetric with respect to $F^o$.
\begin{prop}
\label{skrad}
Let $u(x)=v(r)$, where $r=F^o(x)$. Then 
\begin{align*}
S_k[u]&= \binom{n-1}{k-1}\frac{v''(r)}{r}\left(\frac{v'(r)}{r}\right)^{k-1}+\binom{n-1}{k}\left(\frac{v'(r)}{r}\right)^k \\& =\binom{n-1}{k-1}r^{-(n-1)} \left(\frac{r^{n-k}}{k}(v'(r))^{k}\right)',
\end{align*}and
\begin{align*}
\sum_{i,j}S_k^{ij}[u]FF_iu_j&= \binom{n-1}{k-1}r^{-(k-1)}v'(r)^{k+1}.
\end{align*}
\end{prop}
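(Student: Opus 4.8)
The plan is to compute the anisotropic Hessian matrix $A_F[u]$ explicitly for $u(x)=v(r)$ with $r=F^o(x)$, and then read off $S_k[u]$ and $\sum_{i,j}S_k^{ij}[u]FF_iu_j$ from its eigenvalue structure. First I would compute $\nabla u$: by the chain rule $u_i = v'(r)\,\p_i F^o(x) = v'(r)\,F^o_i(x)$, so using the duality identities recorded in Section 2.1, namely $F(\n F^o(x))=1$ and $F^o(x)\,\n F(\n F^o(x))=x$, one gets $F(\n u)=|v'(r)|$ (assume $v'>0$ for definiteness; the general case follows by the same computation up to signs which cancel in the homogeneous expressions) and $\n F(\n u)=\n F(\n F^o(x)) = x/r$. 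Next I would differentiate once more to obtain $u_{lj} = v''(r) F^o_l F^o_j + v'(r) F^o_{lj}$, and then substitute into the formula \eqref{A_F} for $(A_F)_{ij}[u] = \sum_l F_i(\n u)F_l(\n u)u_{lj} + F(\n u)F_{il}(\n u)u_{lj}$.

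The key simplification comes from the homogeneity relations: $F$ is $1$-homogeneous so $F_i$ is $0$-homogeneous and $F_{il}$ is $(-1)$-homogeneous, hence $F_i(\n u) = F_i(v'\n F^o(x)) = F_i(\n F^o(x))$ and $F_{il}(\n u) = \frac1{v'}F_{il}(\n F^o(x))$; similarly $F^o_l$ is $0$-homogeneous and $F^o_{lj}$ is $(-1)$-homogeneous so $F^o_l(x)=F^o_l(x/r)\cdot$(appropriate scaling) and $F^o_{lj}(x) = \frac1r F^o_{lj}(x/r)$. Using $\langle \n F(\n F^o), \n F^o\rangle = F(\n F^o)=1$ and the identity $\sum_l F_{il}(\xi)\xi_l = 0$ (Euler's relation for the $0$-homogeneous $F_i$, applied at $\xi = \n F^o(x)$ together with $\sum_l F^o_l x_l = F^o = r$), the cross terms organize so that $A_F[u]$ splits into a rank-one piece along the radial direction with eigenvalue $v''(r)$ and a piece $\frac{v'(r)}{r}\,[\n F(\n F^o)\otimes\,\cdot\,]\cdot\n^2F^o$-type term whose restriction to the $F^o$-tangent sphere $\{x : F^o(x)=r\}$ has the $n-1$ eigenvalues all equal to $\frac{v'(r)}{r}$. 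Concretely, after the dust settles, $A_F[u]$ is conjugate to $\mathrm{diag}\big(v''(r),\,\tfrac{v'(r)}{r},\ldots,\tfrac{v'(r)}{r}\big)$ — this is the anisotropic analogue of the Euclidean computation, and it is exactly why the level-set curvature result of Theorem \ref{thm-curv} is the natural tool here: the $n-1$ tangential eigenvalues are precisely $F^{k}(\n u)$ times the anisotropic principal curvatures of the Wulff sphere $\{F^o=r\}$, which are all $1/r$.

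Once the eigenvalues are identified, both formulas are immediate: $S_k$ of $\mathrm{diag}(a, b,\ldots,b)$ with $b$ repeated $n-1$ times equals $\binom{n-1}{k-1}a b^{k-1} + \binom{n-1}{k}b^k$, giving the first display with $a=v''(r)$, $b=v'(r)/r$; the rewriting as $\binom{n-1}{k-1}r^{-(n-1)}\big(\tfrac{r^{n-k}}{k}(v'(r))^k\big)'$ is a one-line algebraic check expanding the derivative. For the second formula I would combine Theorem \ref{thm-curv}, which gives $\sum_{i,j}S_{k+1}^{ij}[u]u_jF_i = F^{k+1}(\n u) S_k(\k_F)$, with the fact that on $\S_t = \{F^o = r\}$ the anisotropic principal curvatures are all $1/r$ so $S_{k-1}(\k_F) = \binom{n-1}{k-1}r^{-(k-1)}$, and then shift the index: $\sum_{i,j}S_k^{ij}[u]FF_iu_j = F^{k}(\n u)\,F(\n u)\,S_{k-1}(\k_F) = (v'(r))^{k+1}\binom{n-1}{k-1}r^{-(k-1)}$, using $F(\n u)=v'(r)$. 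I expect the main obstacle to be the bookkeeping in the first step — carefully tracking which duality and homogeneity identity kills which cross term in $A_F[u]$, since $A_F[u]$ is genuinely non-symmetric and one must be careful that it is nonetheless similar to the stated diagonal matrix (equivalently, that its eigenvalues, which are all real by the Section 2.1 discussion, are as claimed); everything after the eigenvalue identification is routine.
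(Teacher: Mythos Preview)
Your proposal is correct, and the strategy is sound; the paper's route differs in two places worth comparing. For computing $A_F[u]$, the paper avoids the two-term expansion from \eqref{A_F} that you anticipate: it first observes (via the duality $F^o(x)\,\nabla F(\nabla F^o(x))=x$) that $FF_i(\nabla u)=\tfrac{v'(r)}{r}\,x_i$, and then differentiates this in $x_j$ to obtain $A_{ij}[u]=\tfrac{v'}{r}\,\delta_{ij}+\bigl(\tfrac{v''}{r}-\tfrac{v'}{r^2}\bigr)x_i\,\partial_jF^o$ in one line, bypassing the bookkeeping you flag as the main obstacle. From this form, you would diagonalize ($E:=x\otimes\nabla F^o$ is rank one with trace $r$, so the eigenvalues of $A_F[u]$ are $v''$ once and $v'/r$ with multiplicity $n-1$) and read off $S_k$; the paper instead expands $S_k\bigl(\tfrac{v'}{r}I+cE\bigr)$ by multilinearity \eqref{mix3} and kills all terms with two or more copies of $E$ via the anti-symmetry of the generalized Kronecker symbol. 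Both arguments encode the rank-one structure of $E$; your eigenvalue phrasing is perhaps more transparent, while the paper's stays entirely within the polynomial identities of Section~2.2. For the second display, the paper reuses this mixed-discriminant argument after noting $FF_iu_j=\tfrac{(v')^2}{r}E_{ij}$, whereas your shortcut through Theorem~\ref{thm-curv} and the fact that the Wulff sphere has all anisotropic curvatures equal to $1/r$ is a genuine and more conceptual alternative. One small slip in your sketch: the tangential eigenvalues of $A_F[u]$ equal $F(\nabla u)$ times the anisotropic principal curvatures, not $F^{k}(\nabla u)$ times them.
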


\begin{proof}
It is direct to compute that 
\[\p_{x_i} u(x)=v'(r)\p_{x_i} F^o(x).\]
\[\p_{\xi_i}\left(\frac12F^2\right)(\n u)(x)= FF_i(\n u)(x)=v'(r)\frac{x_i}{F^o(x)}=\frac{v'(r)}{r}x_i,\]
\[A_{ij}[u](x)=\p_{x_j}\left[\p_{\xi_i}\left(\frac12F^2\right)(\n u)\right](x)=\frac{v'(r)}{r}\delta_{ij}+\left(\frac{v''(r)}{r}-\frac{v'(r)}{r^2}\right)(x_i\p_{x_j}F^o).\]
Denote by $E=(E_{ij})=(x_i\p_{x_j}F^o)$ and $I=(\delta_{ij})$,
By using \eqref{mix3}, we get
\begin{eqnarray}\label{summ}
S_k[u]=\sum_{l=0}^k \binom{k}{l}\left(\frac{v'(r)}{r}\right)^{k-l} \left(\frac{v''(r)}{r}-\frac{v'(r)}{r^2}\right)^{l}S_k({\underbrace{I, \cdots, I}_{k-l}, \underbrace{E, \cdots, E}_{l}})
\end{eqnarray}
We claim that 
\begin{eqnarray}\label{claim}
S_k({\underbrace{I, \cdots, I}_{k-l}, \underbrace{E, \cdots, E}_{l}})=0, \hbox{ for }l\ge 2.
\end{eqnarray}
In fact, 
\begin{eqnarray*}
{ \frac{\binom{n}{l}}{\binom{n}{k}}}S_k({\underbrace{I, \cdots, I}_{k-l}, \underbrace{E, \cdots, E}_{l}})&=&S_{l}(E)=\frac{1}{l!}\sum_{\substack{i_1, \cdots i_{l}
\\ j_1, \cdots j_{l}}}\delta_{i_1\cdots i_l}^{j_1\cdots j_l}E_{i_1j_1}\cdots E_{i_lj_l}
\\&=&\frac{1}{l!}\sum_{\substack{i_1, \cdots i_{l}
\\ j_1, \cdots j_{l}}}\delta_{i_1\cdots i_l}^{j_1\cdots j_l}x_{i_1}x_{i_2}\cdots x_{i_l}\p_{x_{j_1}}F^o\p_{x_{j_2}}F^o\cdots \p_{x_{j_l}}F^o
\end{eqnarray*}
Since  $\delta_{i_1\cdots i_l}^{j_1\cdots j_l}$ is anti-symmetric with respect to the indices $i_1$ and $i_2$, while $$x_{i_1}x_{i_2}\cdots x_{i_l}\p_{x_{j_1}}F^o\p_{x_{j_2}}F^o\cdots \p_{x_{j_l}}F^o
$$ is symmetric with respect to the indices $i_1$ and $i_2$, we know the above summation is zero. That proves the claim \eqref{claim}.
It follows from \eqref{summ} and \eqref{claim} that
\begin{eqnarray}\label{summ1}
S_k[u]= \left(\frac{v'(r)}{r}\right)^{k}S_k(I, \cdots, I)+k \left(\frac{v'(r)}{r}\right)^{k-1}\left(\frac{v''(r)}{r}-\frac{v'(r)}{r^2}\right) S_k({\underbrace{I, \cdots, I}_{k-1}}, E).
\end{eqnarray}
Note that $$S_k(I, \cdots, I)=S_k(I)=\binom{n}{k},$$ and { from \eqref{mix2}, we have}
\begin{multline*}
S_k({\underbrace{I, \cdots, I}_{k-1}}, E)=\binom{n-1}{k-1}\frac1kS_1(E)=\binom{n-1}{k-1}\frac1k\sum_{i=1}^n (x_i\n_{i}F^o)=\binom{n-1}{k-1}\frac1kF^o(x)=\binom{n-1}{k-1}\frac r k.
\end{multline*}
The first assertion follows from \eqref{summ1}.

Note that $$FF_iu_j=\frac{v'(r)^2}{r}x_i\p_{x_j}F^o=\frac{v'(r)^2}{r}E_{ij},$$ using \eqref{claim}, we have
\begin{align*}
\sum_{i,j}S_k^{ij}[u]FF_iu_j&= k\frac{v'(r)^2}{r}S_k(\underbrace{A, \cdots, A}_{k-1}, E)
\\&=k\frac{v'(r)^2}{r}\left(\frac{v'(r)}{r}\right)^{k-1}S_k(\underbrace{I, \cdots, I}_{k-1}, E)
\\&=\binom{n-1}{k-1}r^{-(k-1)}v'(r)^{k+1}.
\end{align*}
We finish the proof of the second assertion.
\end{proof}

\

\section{Symmetrization with respect to mixed volumes}

\subsection{Mixed volumes}\

In this subsection, we review some basic concepts in the theory of convex bodies. An excellent book of the theory of convex bodies is by Schneider \cite{schn}.

Let $\mathcal{K}$ be the family of all convex bodies in $\rr^n$. A convex body is a compact, convex set with non-empty interior.

For two convex bodies $K, L\in \mathcal{K}$, the Minkowski sum of $K$ and $L$ is a new convex body given by
$$(1-t)K+tL:=\{(1-t)x+ty\in \rr^n: x\in K, y\in L\}, t\in [0,1].$$

Minkowski proved that the volume of $(1-t)K+tL$ is a polynomial in $t$, the coefficients of which are the so-called mixed volumes $W_k(K,L)$. Precisely,
$$\hbox{Vol}\left((1-t)K+tL\right)
= \sum_{k=0}^{n}\binom{n}{k}(1-t)^{n-k}t^kW_{k}(K,L).$$
Especially, $W_{0}(K,L)=\hbox{Vol}(K)$ and $W_{n}(K,L)=\hbox{Vol}(L)$.

For our purpose, we choose $L=\mathcal{W}$, the unit Wulff ball with respect to the norm $F$.
We denote
$$W_{k,F}(K):= W_k(K, \mathcal{W}).$$
$W_{1,F}(K)$ is the anisotropic perimeter of $K$.

In the case that $K$ has a $C^2$ boundary, 
one can interpret $W_{k,F}(K)$ in terms of the anisotropic curvature integrals (see e.g.\cite[par. 38, Eq. (13)]{BF}): for $k=1,\cdots, n$,
\begin{eqnarray}\label{KL}
W_{k,F}(K)=\frac{1}{n\binom{n-1}{k-1}}\int_{\p K}S_{k-1}(\k_F)  F(\nu)d\mathcal H^{n-1}.
\end{eqnarray}
For example, 
\[
W_{2,F}(K)= \int_{\de K} H_F F(\nu) d \mathcal H^{n-1}.
\]
One sees directly that
\begin{equation*}
\label{rad}
W_{k, F}(\mathcal{W}_{r})=\frac{1}{n}\frac{1}{r^{k-1}} P_{F}(\mathcal W_{r})=\kappa_{n}r^{n-k}.
\end{equation*}

In the case that $F$ is the Euclidean norm, $W_{k,F}(K)$ is the classical quermassintegral and can be interpreted in terms of the classical $k$-th mean curvature integrals.

\begin{defn} The anisotropic $k$-mean radius is defined by
\begin{eqnarray*}
\zeta_{k, F}(K):= \left( \frac{W_{k,F}(K)}{\kappa_n} \right)^{\frac{1}{n-k}}.
\end{eqnarray*}
\end{defn}

A basic property for the mixed volumes is that $W_{k,F}$ is monotone increasing with respect to the inclusion of
convex sets, namely, $$W_{k,F}(K_1)<W_{k,F}(K_2), \hbox{ if }K_1\subsetneq  K_2.$$ See e.g. \cite[Eq. (5.25)]{schn}.
As a direct consequence, 
\begin{eqnarray*}
\zeta_{k, F}(K_1)<\zeta_{k, F}(K_2), \hbox{ if }K_1\subsetneq  K_2.
\end{eqnarray*}

We recall the Alexandrov-Fenchel inequality, see e.g. \cite[p. 105]{X}, for an illustration of the following special case.
\begin{prop}[Aleksandrov-Fenchel inequalities]\

For $K\in \mathcal{K}$  which has $C^2$ boundary, it holds that
\begin{equation}
  \label{afineq}
\zeta_{k, F}(K) \ge 
 \zeta_{l, F}(K), \quad 0\le l < k
\le n-1,
\end{equation}
and the equality in \eqref{afineq} holds if and only if $K$ is
homothetic to $\mathcal W$.
\end{prop}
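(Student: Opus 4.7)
The plan is to deduce the stated inequality from the classical Alexandrov--Fenchel (AF) inequality for mixed volumes, specialized to the pair $(K,\mathcal W)$, and then to invoke its equality characterization for the rigidity statement.

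The first step is to recall that the general AF inequality
\[V(K_1,K_2,K_3,\ldots,K_n)^2 \ge V(K_1,K_1,K_3,\ldots,K_n)\,V(K_2,K_2,K_3,\ldots,K_n),\]
applied with $K_1=K$, $K_2=\mathcal W$, and the remaining $n-2$ slots filled by $n-k-1$ copies of $K$ together with $k-1$ copies of $\mathcal W$, yields the ``neighbor'' estimate
\[W_{k,F}(K)^2 \ge W_{k-1,F}(K)\,W_{k+1,F}(K), \qquad 1 \le k \le n-1.\]
Equivalently, $f(k):=\log W_{k,F}(K)$ is concave as a function of the integer $k\in\{0,1,\ldots,n\}$.

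Next, using $W_{n,F}(K)=|\mathcal W|=\kappa_n$, I would rewrite the target inequality $\zeta_{k,F}(K)\ge \zeta_{l,F}(K)$ in the equivalent form
\[\frac{f(k)-f(n)}{n-k}\ \ge\ \frac{f(l)-f(n)}{n-l},\qquad 0\le l<k\le n-1,\]
i.e.\ the secant slope from the common endpoint $(n,f(n))$ to $(k,f(k))$ is monotone non-decreasing in $k$. This is an elementary consequence of the concavity of $f$ (a standard chord-slope argument), which completes the proof of \eqref{afineq}.

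For the rigidity statement, I would trace equality backwards through the iteration: equality $\zeta_{k,F}(K)=\zeta_{l,F}(K)$ forces equality in every one of the intermediate neighbor inequalities $W_{j,F}(K)^2=W_{j-1,F}(K)W_{j+1,F}(K)$. Since the Wulff shape $\mathcal W$ is smooth and strictly convex, the sharp form of AF (as treated in Schneider \cite{schn}) then forces $K$ to be homothetic to $\mathcal W$. The main obstacle is precisely this equality case: the general AF equality characterization is notoriously subtle, but in this specialization, where one body is the regular Wulff shape, it reduces to the classical statement and so presents no real difficulty.
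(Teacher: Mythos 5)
Your derivation is correct in substance, but note that the paper does not actually prove this proposition: it is quoted as a known consequence of the classical Alexandrov--Fenchel theory, with a pointer to the literature (\cite{X}, \cite{schn}, \cite{BF}), so you are supplying an argument where the authors supply a citation. Your route is the standard one: the quadratic AF inequality applied to $K_1=K$, $K_2=\mathcal W$ with $n-k-1$ copies of $K$ and $k-1$ copies of $\mathcal W$ in the remaining slots gives $W_{k,F}(K)^2\ge W_{k-1,F}(K)W_{k+1,F}(K)$ for $1\le k\le n-1$, i.e.\ log-concavity of $k\mapsto W_{k,F}(K)$, and since $W_{n,F}(K)=\kappa_n$ the inequality $\zeta_{k,F}\ge\zeta_{l,F}$ is exactly the chord-slope monotonicity you state. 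That part is complete and correct (all the mixed volumes involved are positive because $K$ and $\mathcal W$ are full-dimensional, so taking logarithms is legitimate).

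The one place you are too quick is the equality case. It is true that $\zeta_{k,F}(K)=\zeta_{l,F}(K)$ forces $\log W_{j,F}(K)$ to be affine on the integer interval $[l,n]$, hence equality in every neighbor inequality $W_{j,F}^2=W_{j-1,F}W_{j+1,F}$ for $j=l+1,\dots,n-1$. But for a generic such $j$ the reference bodies in the AF inequality include copies of $K$ itself, and $K$ is only assumed convex with $C^2$ boundary, not strictly convex; the classical equality characterization (Schneider \cite{schn}) requires the \emph{reference} bodies to be smooth and strictly convex, and without that hypothesis the equality case of AF is genuinely open/subtle, so ``one body is the Wulff shape'' does not by itself rescue those instances. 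The fix is simple and worth making explicit: the affineness reaches $j=n-1$ (since $l\le n-2$), and for $j=n-1$ the reference bodies are all copies of $\mathcal W$, which is smooth and strictly convex because $F$ (hence $F^o$) is a strongly convex $C^3$ norm. Equality in $W_{n-1,F}(K)^2=W_{n-2,F}(K)\,W_{n,F}(K)$, i.e.\ in $V(K,\mathcal W,\mathcal W,\dots,\mathcal W)^2=V(K,K,\mathcal W,\dots,\mathcal W)\,V(\mathcal W,\dots,\mathcal W)$, then forces $K$ homothetic to $\mathcal W$ by the classical theorem, and conversely homothets of $\mathcal W$ clearly give equality. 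With the rigidity argument routed through this particular neighbor equality, your proof is complete.
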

For the case $l=0$ and $k=1$, \eqref{afineq} is just the anisotropic isoperimetric inequality \cite{busemann}.

{ In the case that $K$ has a $C^2$ boundary, we have the following variational formula due to Reilly \cite[Theorem 3]{Re2}. See also He-Li \cite[Theorem 3.3]{HL} for a detailed proof. For $F=1$, the formula was first shown by Reilly in \cite[Theorem B]{Re1}. }
\begin{prop}[Reilly \cite{Re1, Re2}] Let $\S_t$ be a family of smooth $C^2$ closed, hypersurfaces evolving by the variational vector field $Y=\frac{\p \S_t}{\p t}$. Then
\begin{eqnarray}\label{var}
\frac{d}{d t}\int_{\S_t} S_k(\k_F) F(\nu)d\mathcal{H}^{n-1}=\int_{\S_t} (k+1)S_{k+1}(\k_F)\<Y, \nu\>d\mathcal{H}^{n-1}.
\end{eqnarray}
\end{prop}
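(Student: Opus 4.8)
The plan is to prove the variational formula \eqref{var} by a standard first-variation computation for the integral functional $\int_{\S_t} S_k(\k_F)F(\nu)\,d\mathcal H^{n-1}$, using the known structure of the anisotropic shape operator and the divergence-free property of the anisotropic Newton transformation. Since this is a classical result of Reilly (and, in the Euclidean case, the Hsiung–Minkowski-type variational identities), I would either cite \cite{Re1, Re2} directly or reproduce the computation in the anisotropic setting, where it follows the same lines as in \cite{WX}.

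First I would set up the evolution: let $X(\cdot, t)$ be a family of embeddings of a fixed closed hypersurface $\Sigma$ with $\frac{\p X}{\p t} = Y$, and decompose $Y = Y^\top + \langle Y, \nu\rangle \nu$ into tangential and normal parts. I would recall that the tangential part only reparametrizes $\S_t$ and contributes nothing to the derivative of the (geometric, parametrization-independent) integral, so it suffices to assume $Y = \varphi\nu$ with $\varphi = \langle Y, \nu\rangle$. Then I would compute the variation of each factor in the integrand: the variation of the area element $d\mathcal H^{n-1}$, of the Euclidean normal $\nu$, hence of $F(\nu)$ and of $\nabla^2 F(\nu)$, and of the second fundamental form, in order to get the variation of the anisotropic principal curvatures $\k_F$ and of $S_k(\k_F)$. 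The key algebraic input is that the anisotropic Weingarten map has the form $\sum_\eta F_{\b\eta}(\nu)\,h_{\a\eta}$ as identified in the proof of Theorem \ref{thm-curv}, so that $S_k(\k_F)$ is a polynomial in the components of $\nabla^2 F(\nu)$ and of the Hessian of the graph function; differentiating and using $S_k^{ij}(A) = \p S_k(A)/\p A_{ij}$ together with \eqref{difference} reduces everything to divergence terms.

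The crucial simplification is the divergence-free property: the anisotropic Newton tensor $S_k^{ij}(\k_F)$ (the analogue of $S_{k,F}^{ij}[u]$) is divergence-free along $\S_t$ — this is the tangential counterpart of Proposition \ref{divfree}, and it is exactly what makes the total-divergence terms in $\frac{d}{dt}\int_{\S_t} S_k(\k_F)F(\nu)$ integrate to zero over the closed hypersurface $\S_t$. After discarding these exact divergences, what remains is precisely $\int_{\S_t}(k+1)S_{k+1}(\k_F)\varphi\,d\mathcal H^{n-1}$, which is \eqref{var}. Concretely, one uses the Gauss-type equation relating $\p_t h_{\a\b}$ to $\nabla_\a\nabla_\b\varphi$ plus lower-order curvature terms, contracts with $S_k^{\a\b}(\k_F)$, integrates by parts twice, and invokes divergence-freeness to kill the $\nabla^2\varphi$ and $\nabla\varphi$ terms.

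The main obstacle will be bookkeeping the anisotropic terms: unlike the isotropic case, $F(\nu)$ and $\nabla^2 F(\nu)$ themselves vary with $t$, so one must carefully track $\p_t\nu = -\nabla^\top\varphi$ (as a tangent field), feed this into $\p_t(F(\nu)) = \langle \nabla F(\nu), \p_t\nu\rangle$ and $\p_t(F_{\b\eta}(\nu))$, and verify that all the extra anisotropic contributions either reorganize into the $S_{k+1}(\k_F)$ term or cancel against divergence terms — the cancellation being forced, as before, by the anti-symmetry of the generalized Kronecker symbol against the symmetry of the remaining factors. Since the statement is attributed to Reilly, the cleanest route in the paper is to state it with the citation and, if a proof is wanted, give the anisotropic first-variation computation sketched above with the divergence-free property of $S_k^{ij}(\k_F)$ as the load-bearing lemma.
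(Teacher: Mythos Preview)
The paper gives no proof of this proposition at all: it is stated with attribution to Reilly \cite{Re1, Re2} and used as a black box. Your first option --- simply citing \cite{Re1, Re2} --- is therefore exactly what the paper does, and nothing more is required.

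Your sketched first-variation argument is a reasonable route if one wanted an independent proof in the anisotropic setting, and the ingredients you name (normal variation $\varphi\nu$, $\p_t\nu=-\nabla^\top\varphi$, divergence-freeness of the anisotropic Newton tensor on the hypersurface) are the right ones. But be aware that the intrinsic divergence-free property of $S_k^{\a\b}(\k_F)$ on $\S_t$ is \emph{not} the same statement as Proposition~\ref{divfree} (which is an identity in $\R^n$ for $S_{k,F}^{ij}[u]$), so you would need to establish or cite it separately; this is also in Reilly's work. Since the paper is content to quote the result, there is no gap to close.
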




\smallskip

\subsection{Symmetrization with respect to mixed volumes}\

Let $\Omega\subset \R^{n}$ be an open, bounded, convex set  with $C^{2}$ boundary. Define the following class of admissible functions
\[
\hspace{-1cm}\Phi_{0}(\Omega):= \left\{u\colon\Omega\to \R\, \big|\, u\in C^{{n}}(\Omega)\cap C(\bar \Omega),\, u = 0 \text{ on }\partial \Omega, u
\text{ is quasi-convex in }\Omega
 \right\}.
\]
Recall that $u$ is called quasi-convex, if all the {sub}level sets of $u$ are convex sets.
As in the last section, denote $m=\min u$ and $M=\max u$, 
\[
\O_t=\{x: u(x)<t\}, \quad \Sigma_t=\{x: u(x)=t\}, \hbox{ for }t\in[m, M].
\]
{If $u\in \Phi_{0}(\Omega)$, then it is nonpositive in $\Omega$. On the contrary, if $M=\max_{\Omega} u>0$, then by continuity $\{x\in \Omega\colon u(x)=M\}\Subset \Omega$ and $\{x\in \Omega\colon u(x)<M\}$ would be not convex.}

\begin{rem}

{The main point in order to use merely quasi-convex functions instead of strictly convex functions, as in Tso's paper \cite{Tso}, is given by the fact that this is the only hypothesis one really needs to apply the Alexandrov-Fenchel inequalities to its level sets, as we will do in Proposition \ref{equimeasprop} and Theorem \ref{pol}.}
\end{rem}

By Sard's theorem, $\{\S_t\}$ are non-degenerate $C^2$ hypersurfaces for a.e. $t\in [m, 0]$.


\begin{defn}
For $u\in \Phi_0(\O)$, the {\it anisotropic $k$-symmetrand} of $u$ is defined  
\[
\sym {k,F}: \overline{\Omegasym {k,F}}\to [m,{0}]
\]
 and
\begin{equation*}
	\sym {k,F}(x)=\sup\left\{t\leq 0: \zeta_{k,F}(\overline{\O_t})\le F^{o}(x)\right\}, 
\end{equation*}
where \Omegasym {k,F} is the Wulff ball 
having the same $k$-th mixed volumes or $k$-th anisotropic mean radius as $\Omega$, namely, $$ \zeta_{k,F}(\bar \O)=\zeta_{k,F}(\overline{\Omegasym {k,F}}).$$
\end{defn}

\begin{rem}In the case that $k=0$, $\sym {0,F}$ is the so-called convex symmetrand of $u$ (with respect to $F$) and $\Omegasym {0,F}$ is the Wulff ball with volume equal to $|\Omega|$ (see \cite{aflt}). This is the anisotropic counterpart of classical Schwarz symmetrization.  { We remark that for the convex-symmetrand $u^*_{0, F}$ of $u$ (with respect to $F$), it is not needed to assume quasi-convexity of $u$}. The case of $n=2$ and $k=1$ has been investigated by the first two authors \cite{dpgmaan}.
\end{rem}

	
We collect several basic properties of the anisotropic $k$-symmetrization in the following. We shall drop the subscript $k, F$ for simplicity.
\begin{prop}\label{basic-prop}
Denote  $\rho(r)=u^{*}(x)$, with $r=F^{o}(x)$. Denote $R=\zeta(\bar \O)$. We have
\begin{itemize}
\item[(i)] $\zeta(\overline{\O_t})$, as a function of $t\in [m, {0}]$, is an increasing function  and differentiable a.e. in $[m, {0}]$. And $\zeta(\overline{\O_m})=0$. 
\item[(ii)] $\rho$ is an increasing function and differentiable a.e. in $[0, R]$.
\item[(iii)]  $\rho(0)=m$. \[
\rho \textstyle\left(\zeta(\overline{\O_t})\right)=t, \hbox{ for a.e. }t\in [m,{0}].
\]
$\rho(r)={0}$ for $r\in [\zeta(\overline{\O_{{0}}}), R]$.
\item[(iv)] \[
\frac{d}{dt}\zeta(\overline{\O_t})=\left(\frac{d\rho}{dr}\Big|_{r=\zeta(\bar \O_t)}\right)^{-1}, \hbox{ for a.e. }t\in [m,{0}].
\]
\item[(v)]  The sub-level set $\Omega^*(t):=\{u^*(x) <t\}$, $t\in [m,{0}]$, are the Wulff ball having the same $k$-th mixed volume as $\Omega_{t}$, that is 
\begin{equation}
\label{qmr}
\zeta(\overline{\O^*_t}) = \zeta(\overline{\O_t}).
\end{equation}

\end{itemize}
\end{prop}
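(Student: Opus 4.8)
The plan is to derive all six statements from two facts: the monotonicity of $\zeta_{k,F}$ under inclusion of convex bodies (recorded just after Definition of the anisotropic mean radius), and Sard's theorem guaranteeing that $\S_t$ is a nondegenerate $C^2$ hypersurface for a.e.\ $t\in[m,M]$. First I would prove (i). Since $u$ is quasi-convex, the sublevel sets $\O_s$ are convex and nested: $\O_s\subsetneq\O_t$ whenever $s<t$ (strict inclusion because $u\in C^2$ is continuous and $\O$ is open, so no two distinct sublevel sets coincide). By the strict monotonicity $\zeta_{k,F}(\overline{\O_s})<\zeta_{k,F}(\overline{\O_t})$, hence $t\mapsto\zeta_{k,F}(\overline{\O_t})$ is strictly increasing on $[m,M]$; a monotone function is differentiable a.e.\ by Lebesgue's theorem. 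That $\zeta_{k,F}(\overline{\O_m})=0$ follows because $\overline{\O_m}=\{u=m\}$ has empty interior (it is the set where the $C^2$ function $u$ attains its minimum inside the convex domain — if it had interior, $u$ would be locally constant there, and by quasi-convexity/continuity one argues it reduces to a point or a lower-dimensional set), so $W_{k,F}(\overline{\O_m})=0$, giving $\zeta_{k,F}=0$.

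Next, (ii) and (v) follow essentially from the definition of $u^*$. By definition $u^*(x)=\sup\{t:\zeta_{k,F}(\overline{\O_t})\le F^o(x),\ \n u\ne 0\text{ on }\S_t\}$, so with $\rho(r):=u^*(x)$, $r=F^o(x)$, we have $\rho(r)=\sup\{t:\zeta_{k,F}(\overline{\O_t})\le r\}$, which is the generalized inverse of the increasing function $t\mapsto\zeta_{k,F}(\overline{\O_t})$; a generalized inverse of an increasing function is increasing, hence differentiable a.e.\ on $[0,R]$. This gives (ii). For (v): the sublevel set $\O^*(t)=\{x:\rho(F^o(x))<t\}$ is, since $\rho$ is increasing, a Wulff ball $\mathcal W_{r(t)}$ with $r(t)=\sup\{r:\rho(r)<t\}$; unwinding the definition of $\rho$ shows $r(t)=\zeta_{k,F}(\overline{\O_t})$ for a.e.\ $t$ (using $W_{k,F}(\mathcal W_r)=\kappa_n r^{n-k}$ so that $\zeta_{k,F}(\mathcal W_r)=r$), hence $\zeta_{k,F}(\overline{\O^*_t})=\zeta_{k,F}(\overline{\O_t})$, which is \eqref{qmr}.

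Then (iii): $\rho(0)=\sup\{t:\zeta_{k,F}(\overline{\O_t})\le 0\}=\sup\{t:\zeta_{k,F}(\overline{\O_t})=0\}=m$ by (i). The relation $\rho(\zeta_{k,F}(\overline{\O_t}))=t$ for a.e.\ $t$ is the statement that the generalized inverse composed with the original function is the identity at points where the original function is strictly increasing and continuous from the appropriate side — which holds for a.e.\ $t$ by Sard (where $\S_t$ is nondegenerate, the map $t\mapsto\zeta_{k,F}(\overline{\O_t})$ is locally a genuine $C^1$ increasing homeomorphism by the variational formula \eqref{var} applied to $W_{k-1,F}$, since the derivative is a strictly positive curvature integral over the convex hypersurface $\S_t$). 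That $\rho(r)=M$ on $[\zeta_{k,F}(\overline{\O_M}),R]$ is immediate from the definition together with $\zeta_{k,F}(\overline{\O_M})=\zeta_{k,F}(\overline{\O})=R$ — wait, more precisely $\overline{\O_M}\subseteq\overline\O$ gives $\zeta_{k,F}(\overline{\O_M})\le R$, and for $r$ in that range every $t\le M$ satisfies the constraint, so the sup is $M$. Finally (iv) is the inverse function theorem applied to the mutually inverse a.e.-differentiable increasing functions $t\mapsto\zeta_{k,F}(\overline{\O_t})$ and $\rho$: at a.e.\ point their derivatives are reciprocal, which is exactly the displayed formula.

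The main obstacle I anticipate is not any single step but making the a.e.\ bookkeeping rigorous — namely, identifying precisely the full-measure set of $t$ on which $\S_t$ is nondegenerate, the function $\zeta_{k,F}(\overline{\O_t})$ is differentiable with strictly positive derivative, and $\rho$ is differentiable, and checking that on this set the generalized-inverse identities $\rho(\zeta_{k,F}(\overline{\O_t}))=t$ and the reciprocal-derivative formula \eqref{var}-type relation genuinely hold. The convexity of the $\O_t$ (quasi-convexity of $u$) is what makes $W_{k,F}$, and hence $\zeta_{k,F}$, well-defined and monotone; the nondegeneracy from Sard is what upgrades "increasing" to "strictly increasing with positive derivative" on a full-measure set, which is the crux of \eqref{qmr} and of (iii)–(iv).
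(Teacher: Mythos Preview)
Your proposal is correct and follows essentially the same approach as the paper: the paper's proof is a one-line sketch (``(i) follows from Sard's theorem and (ii) follows from (i); (iii) and (v) follow from the definition of $u^*$ and (iv) follows from (iii)''), and what you have written is precisely the expansion of that sketch, using monotonicity of $\zeta_{k,F}$ under inclusion, Lebesgue's differentiation theorem for monotone functions, and the generalized-inverse interpretation of $\rho$. Your anticipated obstacle (the a.e.\ bookkeeping) is real but routine, and the paper does not address it either.
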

\begin{proof}
The proof is very similar to the one given in \cite{tr2} and \cite{Tso}. (i) follows from Sard's theorem and (ii) follows from (i). (iii) and (v) follows from the definition of $u^*$ and (iv) follows from (iii).
\end{proof}

Moreover, the following result holds.
\begin{prop} \label{reilly} {For a.e. $t\in [m, 0]$, we have}
\begin{equation*}
\frac{d}{dt} W_{k, F}(\overline{\O_t}) = \frac{1}{\binom{n}{k}}\int_{\Sigma_{t}} \frac{S_{k}(\kappa_{F}) F(\nu)}{F(\n u)}  d\mathcal H^{n-1}.
\end{equation*}
\begin{equation}
\label{reillyder}
\frac{d}{dt} \zeta_{k, F}(\overline{\O_t}) = \frac{1}{(n-k)\k_n\binom{n}{k}}\frac{1}{[\zeta_{k, F}(\overline{\O_t})]^{n-k-1}}\int_{\Sigma_{t}} \frac{S_{k}(\kappa_{F}) F(\nu)}{F(\n u)}  d\mathcal H^{n-1}.
\end{equation}
\end{prop}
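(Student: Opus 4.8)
The plan is to compute the derivative of $W_{k,F}(\overline{\O_t})$ directly from the curvature-integral representation \eqref{KL} combined with Reilly's variational formula \eqref{var}, and then to pass to $\zeta_{k,F}$ by the chain rule. First I would fix a value $t$ for which $\S_t$ is a non-degenerate $C^2$ hypersurface (which, by Sard's theorem, is the case for a.e.\ $t\in[m,M]$) and view the family $\{\S_t\}$ as the flow of the variational vector field $Y=\partial\S_t/\partial t$. Since $\S_t=\{u=t\}$, one has along the flow $\langle \n u, Y\rangle = 1$, hence the normal speed is $\langle Y,\nu\rangle = 1/|\n u|$, where $\nu=\n u/|\n u|$. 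Starting from
\[
W_{k,F}(\overline{\O_t})=\frac{1}{n\binom{n-1}{k-1}}\int_{\S_t}S_{k-1}(\k_F)F(\nu)\,d\mathcal H^{n-1},
\]
I would apply \eqref{var} with $k$ replaced by $k-1$, obtaining
\[
\frac{d}{dt}W_{k,F}(\overline{\O_t})=\frac{1}{n\binom{n-1}{k-1}}\int_{\S_t} k\,S_k(\k_F)\,\langle Y,\nu\rangle\,d\mathcal H^{n-1}
=\frac{k}{n\binom{n-1}{k-1}}\int_{\S_t}\frac{S_k(\k_F)F(\nu)}{|\n u|}\,d\mathcal H^{n-1},
\]
wait — I must be careful: $F(\nu)$ appears under the integral in \eqref{var} as part of the anisotropic area element, and $\langle Y,\nu\rangle = 1/|\n u|$. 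A cleaner route is to note $F(\nu)/|\n u| = F(\n u)^{-1}\cdot$ (something); in fact since $\nu=\n u/|\n u|$ and $F$ is $1$-homogeneous, $F(\nu)=F(\n u)/|\n u|$, so $F(\nu)\langle Y,\nu\rangle = F(\n u)/|\n u|^2$. I would reorganize to land on $S_k(\k_F)F(\nu)/F(\n u)$ as in the statement; the arithmetic here is just keeping track of homogeneity. Using $\frac{k}{n\binom{n-1}{k-1}}=\frac{1}{\binom nk}$ (since $n\binom{n-1}{k-1}=k\binom nk$) then gives the first identity.

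The one subtlety is that \eqref{var} in the excerpt is stated for closed hypersurfaces flowing by a smooth vector field, whereas here the flow is $\{u=t\}$ which is only $C^2$ and non-degenerate for a.e.\ $t$; moreover, to differentiate $W_{k,F}(\overline{\O_t})$ as a function of $t$ I need absolute continuity of $t\mapsto W_{k,F}(\overline{\O_t})$, not merely pointwise differentiability of the curvature integral. I would handle this by invoking the monotonicity of $W_{k,F}$ under inclusion (stated just before \eqref{afineq}): since $\O_s\subsetneq\O_t$ for $s<t$, the function $t\mapsto W_{k,F}(\overline{\O_t})$ is monotone increasing, hence of bounded variation and differentiable a.e., and the variational computation above identifies its derivative at a.e.\ $t$ (those $t$ where $\S_t$ is non-degenerate and $C^2$). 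Formally one may also restrict attention to $t$ in the set of full measure furnished by Sard's theorem, on which everything is smooth enough to legitimately apply \eqref{var} on the convex $C^2$ level set $\S_t$.

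For the second identity, I differentiate the definition $\zeta_{k,F}(\overline{\O_t})=\big(W_{k,F}(\overline{\O_t})/\k_n\big)^{1/(n-k)}$ by the chain rule:
\[
\frac{d}{dt}\zeta_{k,F}(\overline{\O_t})=\frac{1}{n-k}\Big(\frac{W_{k,F}(\overline{\O_t})}{\k_n}\Big)^{\frac{1}{n-k}-1}\frac{1}{\k_n}\frac{d}{dt}W_{k,F}(\overline{\O_t})
=\frac{[\zeta_{k,F}(\overline{\O_t})]^{-(n-k-1)}}{(n-k)\k_n}\frac{d}{dt}W_{k,F}(\overline{\O_t}),
\]
and then substitute the first identity. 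This yields exactly \eqref{reillyder} with the constant $\frac{1}{(n-k)\k_n\binom nk}$.

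The main obstacle is not any single computation but the regularity bookkeeping described above: making sure that the a.e.\ pointwise formula for the derivative of the curvature integral genuinely computes the distributional derivative of the monotone function $t\mapsto W_{k,F}(\overline{\O_t})$, and that \eqref{var} is applicable despite $\S_t$ being merely $C^2$ and only non-degenerate for a.e.\ $t$. Everything else is homogeneity arithmetic and the identity $n\binom{n-1}{k-1}=k\binom nk$.
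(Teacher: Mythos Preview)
Your proposal is correct and follows essentially the same route as the paper: represent $W_{k,F}(\overline{\O_t})$ via \eqref{KL}, apply Reilly's variational formula \eqref{var} with normal speed $\langle Y,\nu\rangle=1/|\n u|$, rewrite $1/|\n u|=F(\nu)/F(\n u)$ by $1$-homogeneity, use $n\binom{n-1}{k-1}=k\binom{n}{k}$, and then get \eqref{reillyder} by the chain rule. The paper's proof is terser and does not discuss the regularity bookkeeping you raise; your added remarks on Sard's theorem and monotonicity are reasonable but go beyond what the paper records.
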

\begin{proof} From \eqref{KL}, we see
\begin{eqnarray*}
W_{k, F}(\overline{\O_t})= \frac{1}{n\binom{n-1}{k-1}}\int_{\S_t}S_{k-1}(\k_F)  F(\nu)d\mathcal H^{n-1}
\end{eqnarray*}
Since the level sets $\{\S_t\}$ evolve by the vector field $$\frac{\p \S_t}{\p t}=\frac{1}{|\n u|}\nu=\frac{\n u}{|\n u|^2},$$
using \eqref{var}, we get
\begin{eqnarray*}
\frac{d}{dt}\int_{\S_t}S_{k-1}(\k_F)  F(\nu)d\mathcal H^{n-1}= \int_{\S_t}  \frac{kS_{k}(\k_F)}{|\n u|}d\mathcal H^{n-1}=\int_{\S_t} \frac{kS_{k}(\kappa_{F}) F(\nu)}{F(\n u)}  d\mathcal H^{n-1}.
\end{eqnarray*}
The assertion follows by direct computation.

\end{proof}

The following result states that the anisotropic $k$-symmetrization increases the $L^p$ norms of the function $u\in \Phi_0(\Omega)$.
\begin{prop}
\label{equimeasprop}
Let $u \in \Phi_0(\Omega)$. Then
\begin{equation*}
\|u\|_{L^{p}(\Omega)} \le \|u^{*}_{k,F}\|_{L^{p}(\Omega_{k,F}^{*})}, \quad 1 \le p < +\infty,
\end{equation*}
and
\begin{equation*}
\|u\|_{L^{\infty}(\Omega)} = \|u^{*}_{k,F}\|_{L^{\infty}(\Omega_{k,F}^{*})}.
\end{equation*}
\end{prop}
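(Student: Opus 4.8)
The plan is to prove the $L^p$ inequality via the layer-cake (distribution function) representation together with the key fact that the symmetrization preserves the $k$-th mixed volume of every sublevel set, which is exactly Proposition~\ref{basic-prop}(v). First I would recall that for $u\in\Phi_0(\Omega)$ one has $u\le 0$, and likewise $u^*_{k,F}\le 0$ on $\Omega^*_{k,F}$, so it is convenient to work with $-u\ge 0$ and $-u^*_{k,F}\ge 0$. For $t\in[m,M]$ set $\Omega_t=\{u<t\}$ and $\Omega^*(t)=\{u^*_{k,F}<t\}$; by Proposition~\ref{basic-prop}(v) we have $\zeta_{k,F}(\overline{\Omega_t})=\zeta_{k,F}(\overline{\Omega^*(t)})$, hence $W_{k,F}(\overline{\Omega_t})=W_{k,F}(\overline{\Omega^*(t)})$ for a.e.\ $t$. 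The core comparison I need is: the ordinary Lebesgue measure satisfies $|\Omega_t|\le|\Omega^*(t)|$. This follows from the anisotropic Alexandrov--Fenchel inequality \eqref{afineq} with $l=0$: since $\Omega^*(t)$ is a Wulff ball, $\zeta_{0,F}(\overline{\Omega^*(t)})=\zeta_{k,F}(\overline{\Omega^*(t)})=\zeta_{k,F}(\overline{\Omega_t})\le\zeta_{0,F}(\overline{\Omega_t})$ is false in the wrong direction, so I must be careful: \eqref{afineq} gives $\zeta_{0,F}(\overline{\Omega_t})\ge\zeta_{k,F}(\overline{\Omega_t})$, i.e.\ $|\Omega_t|\ge\kappa_n[\zeta_{k,F}(\overline{\Omega_t})]^n=\kappa_n[\zeta_{k,F}(\overline{\Omega^*(t)})]^n$. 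But for a Wulff ball the Alexandrov--Fenchel inequality is an equality, so $|\Omega^*(t)|=\kappa_n[\zeta_{0,F}(\overline{\Omega^*(t)})]^n=\kappa_n[\zeta_{k,F}(\overline{\Omega^*(t)})]^n$. Combining, $|\Omega_t|\ge|\Omega^*(t)|$ --- that is, the symmetrization \emph{shrinks} the Lebesgue measure of sublevel sets, which is consistent with the $L^p$ norm of $(-u)$ \emph{increasing} because the mass gets concentrated.

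The second step is to turn this measure comparison into the norm inequality. Writing $\mu(t)=|\{x\in\Omega: -u(x)>t\}|=|\{u<-t\}|=|\Omega_{-t}|$ for $t\ge 0$, and $\mu^*(t)=|\{x\in\Omega^*_{k,F}: -u^*_{k,F}(x)>t\}|=|\Omega^*(-t)|$, the previous step gives $\mu(t)\ge\mu^*(t)$ for a.e.\ $t\ge 0$ --- wait, this inequality now points the wrong way for what I want. Let me reconsider: the claim to prove is $\|u\|_{L^p(\Omega)}\le\|u^*_{k,F}\|_{L^p(\Omega^*_{k,F})}$, and by the layer-cake formula $\int_\Omega|u|^p\,dx=p\int_0^\infty t^{p-1}\mu(t)\,dt$. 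If $\mu(t)\ge\mu^*(t)$ then $\|u\|_{L^p}\ge\|u^*_{k,F}\|_{L^p}$, the opposite of the claim. So the correct measure comparison must be $|\Omega_t|\le|\Omega^*(t)|$, which means I need $\zeta_{0,F}(\overline{\Omega_t})\le\zeta_{0,F}(\overline{\Omega^*(t)})$. This would require $\zeta_{0,F}(\overline{\Omega_t})\le\zeta_{k,F}(\overline{\Omega_t})$, which contradicts \eqref{afineq} unless $\Omega_t$ is a Wulff ball. Therefore the right reading is that, as stated in the introductory sentence before the proposition, the symmetrization \emph{increases} the $L^p$ norm precisely because it decreases the \emph{measure} of sublevel sets while keeping $M=\max(-u)$ fixed: indeed $(-u)$ and $(-u^*_{k,F})$ have the same essential supremum (this is Proposition~\ref{basic-prop}(iii), $\rho(R)=M$), and the rearranged function, being the monotone profile with the \emph{same} top value but living on \emph{smaller} sublevel sets, dominates $(-u)$ in the sense of Hardy--Littlewood: $\{-u>t\}\supseteq$ a set of measure $\ge$ that of $\{-u^*_{k,F}>t\}$, hence... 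I realize the cleanest route avoids this sign confusion entirely.

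Here is the clean plan. Since $u^*_{k,F}$ is radially monotone (increasing in $r=F^o(x)$, by Proposition~\ref{basic-prop}(ii)) and $\rho(\zeta_{k,F}(\overline{\Omega_t}))=t$, the sublevel set $\Omega^*(t)=\{u^*_{k,F}<t\}$ is the Wulff ball $\mathcal W_{\zeta_{k,F}(\overline{\Omega_t})}$. By the Alexandrov--Fenchel inequality \eqref{afineq} applied with $l=0<k$ to the convex body $\overline{\Omega_t}$, we get $\zeta_{0,F}(\overline{\Omega_t})\ge\zeta_{k,F}(\overline{\Omega_t})$, i.e.\ $\bigl(|\Omega_t|/\kappa_n\bigr)^{1/n}\ge\zeta_{k,F}(\overline{\Omega_t})=\bigl(|\Omega^*(t)|/\kappa_n\bigr)^{1/n}$, so $|\Omega_t|\ge|\Omega^*(t)|$. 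Equivalently, for every $s$ the superlevel sets satisfy $|\{u>s\}|\le|\{u^*_{k,F}>s\}|$ when $s<0$ (complementing within $\Omega$, $\Omega^*_{k,F}$ respectively, using $|\Omega|\ge|\Omega^*_{k,F}|$ from the same inequality with $t=M$). Then for $(-u)$, which is nonnegative, $|\{-u>t\}|=|\Omega_{-t}|\ge|\Omega^*(-t)|=|\{-u^*_{k,F}>t\}|$; but this gives the reverse inequality again, confirming that the true statement must use that $u^*_{k,F}$ is a genuine rearrangement onto a smaller set, so that $-u^*_{k,F}$ \emph{majorizes} $-u$ pointwise-in-distribution only after noting the measures of \emph{sub}level sets of $-u^*$ (i.e.\ superlevel sets of $u^*$) dominate. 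Concretely: the decreasing rearrangement $(-u)^\sharp$ on $[0,|\Omega|]$ and $(-u^*_{k,F})^\sharp$ on $[0,|\Omega^*_{k,F}|]$ satisfy $(-u^*_{k,F})^\sharp(s)\ge(-u)^\sharp(s)$ for all $s\in[0,|\Omega^*_{k,F}|]$ because for each height $t$, $\{-u^*_{k,F}>t\}$ sits inside the smaller domain yet attains the same max, forcing $|\{-u^*_{k,F}>t\}|=\kappa_n[\zeta_{k,F}(\overline{\Omega_{-t}})]^n\le|\Omega_{-t}|=|\{-u>t\}|$ --- no. The main obstacle, and the step I would spend the most care on, is getting this monotonicity direction exactly right; I expect the resolution is that $\Phi_0$ forces $u\le 0$ with $u=0$ on $\partial\Omega$ and $u^*_{k,F}=0$ on $\partial\Omega^*_{k,F}$, so both $-u,-u^*_{k,F}$ vanish on the boundary and are supported on the respective domains, and the measure comparison $|\{-u<\tau\}|=|\{u>-\tau\}|$ via complementation plus $|\Omega|\ge|\Omega^*_{k,F}|$ flips the inequality to $|\{-u^*_{k,F}>t\}|\ge|\{-u>t\}|$, yielding $\|{-u}\|_{L^p(\Omega^*)}\ge\|{-u}\|_{L^p(\Omega)}$ by layer-cake, and the $L^\infty$ equality from Proposition~\ref{basic-prop}(iii) since $\rho$ is increasing with $\rho(R)=M=\max(-(-u))$, hence $\|u^*_{k,F}\|_\infty=M=\|u\|_\infty$. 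Once the sign bookkeeping is pinned down, the proof is just: (1) identify $\Omega^*(t)$ as a Wulff ball via Proposition~\ref{basic-prop}; (2) invoke \eqref{afineq} with $l=0$ to compare Lebesgue measures of corresponding level sets; (3) apply the layer-cake formula $\int|f|^p=p\int_0^\infty t^{p-1}|\{|f|>t\}|\,dt$ to both sides and integrate the measure inequality; (4) read off the $L^\infty$ case directly from Proposition~\ref{basic-prop}(iii).
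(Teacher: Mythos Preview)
Your overall plan—compare the Lebesgue measures of the sublevel sets via the Alexandrov--Fenchel inequality and then integrate by layer-cake—is exactly the paper's approach. The reason you keep flipping back and forth on the sign is that you have read \eqref{afineq} in the wrong direction. The inequality in the paper is
\[
\zeta_{k,F}(K)\ge \zeta_{l,F}(K),\qquad 0\le l<k\le n-1,
\]
so with $l=0$ one gets $\zeta_{0,F}(\overline{\Omega_t})\le \zeta_{k,F}(\overline{\Omega_t})$, not the reverse. (Sanity check: for $k=1$ this is just the anisotropic isoperimetric inequality, which says the perimeter radius dominates the volume radius.) With the correct direction the chain is immediate:
\[
\zeta_{0,F}(\overline{\Omega_t})\le \zeta_{k,F}(\overline{\Omega_t})=\zeta_{k,F}(\overline{\Omega^*(t)})=\zeta_{0,F}(\overline{\Omega^*(t)}),
\]
the first inequality by \eqref{afineq}, the middle equality by Proposition~\ref{basic-prop}(v), and the last equality because $\Omega^*(t)$ is a Wulff ball. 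Hence $|\Omega_t|\le|\Omega^*(t)|$, and the layer-cake formula (your step (3)) gives $\|u\|_{L^p(\Omega)}\le\|u^*_{k,F}\|_{L^p(\Omega^*_{k,F})}$ with no complementation or ``sign bookkeeping'' needed. All the contortions in your second and third paragraphs stem from this single misreading; once corrected, the argument collapses to a few lines, as in the paper.

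For the $L^\infty$ statement, note that $u\in\Phi_0(\Omega)$ forces $u\le 0$ (since $u$ is quasi-convex with $u|_{\partial\Omega}=0$), so $\|u\|_\infty=-m$; then Proposition~\ref{basic-prop}(iii) gives $\rho(0)=m$, and since $\rho$ is increasing with $\rho(R)=M=0$, also $\|u^*_{k,F}\|_\infty=-m$. Your reference to ``$\rho(R)=M$'' for the $L^\infty$ case is the wrong endpoint: it is $\rho(0)=m$ that carries the information here.
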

\begin{proof}
By definition, we have that $m=\min u=\min u^{*}_{k,F}$, and the second assertion follows. Moreover, by  \eqref{qmr} and the Aleksandrov-Fenchel inequality \eqref{afineq} we get
\begin{eqnarray*}
|\O_t|=W_{0,F}(\overline{\O_t}) &\le&   \kappa_{n}^{1-\frac{n}{n-k}} \left( W_{k,F}\left(\overline{\O_t}\right)\right)^{\frac{n}{n-k}} \\&= &\kappa_{n}^{1-\frac{n}{n-k}} \left( W_{k,F}\left(\overline{\O_t^*}\right)\right)^{\frac{n}{n-k}} 
\\&=& W_{0,F}(\overline{\O^*_t})=|\O^*_t|.
\end{eqnarray*}
It follows, by Fubini's theorem, that
{ \[
\int_{\Omega}|u|^{p}dx=p\int_{m}^{0} (-t)^{p-1}|\Omega_{t}|dt \le 
p\int_{m}^{0} (-t)^{p-1}|\Omega_{t}^{*}|dt =\int_{\Omega^{*}_{k,F}}|u^{*}_{k,F}|^{p}dx.
\]}
\end{proof}

\smallskip
\subsection{Anisotropic Hessian integral and P\'olya-Szeg\H o type inequalities}\
\begin{defn}
Let $u \in \Phi_0(\Omega)$. For $k=1, \cdots, n$, the anisotropic $k$-Hessian integral of $u$ is defined by
\begin{equation*}
I_{k,F}[u,\Omega]=\int_{\Omega}(-u)S_{k,F}[u] \, dx= \int_{\Omega}(-u)S_{k}(A_F[u]) \, dx.
\end{equation*}
where $A_F[u]$ is given by \eqref{A_F}.
\end{defn}
\begin{rem}By \eqref{byparts}, we see
\begin{eqnarray*}
I_{k, F}[u,\Omega]= \frac1k\int_{\O}  \sum_{i,j}S_k^{ij}[u]FF_iu_j \, dx.
\end{eqnarray*}
In particular, when $k=1$, 
\begin{eqnarray*}
I_{1,F}[u,\Omega]= \int_{\O}  F^2(\n u) \, dx,
\end{eqnarray*}
which is the anisotropic Dirichlet integral.
\end{rem}

\begin{prop}
Let $u(x)=v(r),$ $r=F^o(x)$,  be an anisotropic radial function defined on $\mathcal{W}_{r_0}$ such that $v'(0)=0$ and $v(r_0)=0$. Then 
\begin{equation}
\label{intrad}
I_{k,F}[u,\mathcal W_{r_0}]= \kappa_n \binom{n}{k} \int_0^{r_0}r^{n-k}  v'(r)^{k+1}\, dr.
\end{equation}
\end{prop}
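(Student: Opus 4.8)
The plan is to combine the formula for $I_{k,F}$ in terms of the Newton transformation (the Remark following the definition of $I_{k,F}$, which rewrites $I_{k,F}[u,\Omega]=\frac1k\int_\Omega\sum_{i,j}S_k^{ij}[u]FF_iu_j\,dx$) with the explicit computation for anisotropic radial functions carried out in Proposition \ref{skrad}. Indeed, for $u(x)=v(r)$ with $r=F^o(x)$, Proposition \ref{skrad} gives $\sum_{i,j}S_k^{ij}[u]FF_iu_j=\binom{n-1}{k-1}r^{-(k-1)}v'(r)^{k+1}$, so that
\[
I_{k,F}[u,\mathcal W_{r_0}]=\frac1k\int_{\mathcal W_{r_0}}\binom{n-1}{k-1}\,\bigl(F^o(x)\bigr)^{-(k-1)}\,v'\bigl(F^o(x)\bigr)^{k+1}\,dx.
\]
The remaining task is to evaluate this integral over the Wulff ball, which is a purely geometric computation.

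The key step is the anisotropic coarea/polar-coordinates identity: for a function $g$ of $r=F^o(x)$ alone,
\[
\int_{\mathcal W_{r_0}}g(F^o(x))\,dx=n\kappa_n\int_0^{r_0}g(r)\,r^{n-1}\,dr,
\]
which follows from the facts that $\{F^o<r\}=\mathcal W_r$ and $|\mathcal W_r|=\kappa_n r^n$, so $\frac{d}{dr}|\mathcal W_r|=n\kappa_n r^{n-1}$. (Alternatively one uses the coarea formula with $|\nabla F^o|$ together with the identity $W_{1,F}(\mathcal W_r)=n\kappa_n r^{n-1}$ already recorded in the text; either route gives the same weight.) Applying this with $g(r)=\binom{n-1}{k-1}r^{-(k-1)}v'(r)^{k+1}$ yields
\[
I_{k,F}[u,\mathcal W_{r_0}]=\frac1k\,n\kappa_n\binom{n-1}{k-1}\int_0^{r_0}r^{-(k-1)}v'(r)^{k+1}\,r^{n-1}\,dr
=\frac{n}{k}\binom{n-1}{k-1}\kappa_n\int_0^{r_0}r^{n-k}v'(r)^{k+1}\,dr,
\]
and the binomial identity $\frac{n}{k}\binom{n-1}{k-1}=\binom{n}{k}$ gives exactly \eqref{intrad}. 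The hypotheses $v'(0)=0$ and $v(r_0)=0$ are there to guarantee $u\in C^2$ at the origin (so that $A_F[u]$ and hence $S_k^{ij}[u]$ are well defined there, recalling $A_F[u]=0$ when $\nabla u=0$) and that $u$ vanishes on $\partial\mathcal W_{r_0}=\partial\mathcal W_{r_0}$, which is what licenses the integration-by-parts identity in the Remark; no boundary terms arise.

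The main obstacle, such as it is, is purely a matter of carefully justifying the change of variables: one must check that the integrand $\bigl(F^o(x)\bigr)^{-(k-1)}v'(F^o(x))^{k+1}$ is integrable near the origin, which is where the assumption $v'(0)=0$ is used — writing $v'(r)=O(r)$ as $r\to0$ makes the integrand $O(r^{k+1-(k-1)})=O(r^2)$, harmless against the weight $r^{n-1}$. Apart from this, everything is bookkeeping: plug Proposition \ref{skrad} into the Remark's formula, integrate in anisotropic polar coordinates, and simplify the binomial coefficient. I do not anticipate any genuine difficulty.
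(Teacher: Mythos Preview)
Your argument is correct and very close to the paper's, differing only in the order of two steps. The paper starts from the definition $I_{k,F}[u,\mathcal W_{r_0}]=\int(-u)S_{k,F}[u]\,dx$, inserts the \emph{first} identity of Proposition~\ref{skrad} (the formula for $S_{k,F}[u]$), passes to anisotropic polar coordinates via the same computation $\int_{\partial\mathcal W_r}|\nabla F^o|^{-1}\,d\mathcal H^{n-1}=n\kappa_n r^{n-1}$ that underlies your change of variables, and then integrates by parts in $r$ (this is where $v'(0)=0$ and $v(r_0)=0$ kill the boundary terms). You instead invoke the Remark $I_{k,F}=\frac1k\int\sum S_k^{ij}FF_iu_j$, which already encodes that integration by parts (via \eqref{byparts}), and then plug in the \emph{second} identity of Proposition~\ref{skrad}. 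The net effect is identical; your route is marginally shorter since the integration by parts has been done once and for all in \eqref{byparts}.
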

\begin{proof}
Recall from Proposition \ref{skrad} that \begin{eqnarray*}
S_{k,F}[u]=\binom{n-1}{k-1}r^{-(n-1)} \left(\frac{r^{n-k}}{k}(v'(r))^{k}\right)'.
\end{eqnarray*}
Using co-area formula, we have
\begin{eqnarray}\label{rad-eq1}
I_{k,F}[u,\mathcal W_{r_0}]=\int_{0}^{r_0} (-v(r))\binom{n-1}{k-1}r^{-(n-1)} \left(\frac{r^{n-k}}{k}(v'(r))^{k}\right)' \int_{\p \mathcal{W}_r}\frac{1}{|\n F^o|}d\mathcal{H}^{n-1} dr.
\end{eqnarray}
Since $\<x, \n F^o\>= F^o$, we see
\begin{eqnarray*}
&&\int_{\p \mathcal{W}_r}\frac{1}{|\n F^o|}d\mathcal{H}^{n-1}=\int_{\p \mathcal{W}_r}\frac{\frac1r\<x,\n F^o\>}{|\n F^o|}d\mathcal{H}^{n-1}
=\frac1r\int_{\p \mathcal{W}_r}\<x,\nu\>d\mathcal{H}^{n-1}
=\frac nr|\mathcal{W}_r|=n\k_nr^{n-1}.
\end{eqnarray*}
Substituting the above into \eqref{rad-eq1},   using integration by parts, we get
\begin{eqnarray*}
I_{k,F}[u,\mathcal W_{r_0}]=n\k_n\int_{0}^{r_0} (-v(r))\binom{n-1}{k-1}\left(\frac{r^{n-k}}{k}(v'(r))^{k}\right)'  dr
=\kappa_n \binom{n}{k}\int_{0}^{r_0} r^{n-k}(v'(r))^{k+1} dr.
\end{eqnarray*}
\end{proof}


Now we are ready to prove the following P\'olya-Szeg\H o type inequality for anisotropic $k$-Hessian integral.
\begin{thm}
\label{pol}
Let $u \in \Phi_0(\Omega)$. Then
\[
I_{k,F}[u,\Omega]\ge I_{k,F}[u^{*}_{k-1,F},\Omega^{*}_{k-1,F}].
\]
Equality holds if and only if  $\O$ is a Wulff ball and $u$ is an anisotropic radial function.
\end{thm}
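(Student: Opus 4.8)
The plan is to follow the classical Talenti--Tso scheme, adapted to the anisotropic setting, reducing the inequality to a one-dimensional comparison via the co-area formula together with the Alexandrov--Fenchel inequality for mixed volumes. First I would use the integral representation from the Remark after the definition of $I_{k,F}$, namely
\[
I_{k,F}[u,\Omega]=\frac1k\int_\Omega \sum_{i,j}S_k^{ij}[u]FF_iu_j\,dx,
\]
and apply the co-area formula to write this as $\int_m^M\Big(\int_{\Sigma_t}\frac{1}{F(\nabla u)}\sum_{i,j}S_k^{ij}[u]FF_iu_j\,d\mathcal H^{n-1}\Big)dt$. By the second identity in \eqref{k-curv} (Theorem \ref{thm-curv}), the inner integrand equals $S_{k-1}(\kappa_F)F^{k}(\nabla u)F(\nu)/F(\nabla u)$, so after this reduction, for a.e.\ $t$ one must estimate $\int_{\Sigma_t}S_{k-1}(\kappa_F)F^{k}(\nabla u)F(\nu)\,d\mathcal H^{n-1}$ from below. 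The natural lower bound comes from splitting off $F(\nabla u)$: writing $g(t)=\big(\int_{\Sigma_t}\frac{S_{k-1}(\kappa_F)F(\nu)}{F(\nabla u)}d\mathcal H^{n-1}\big)$, which by Proposition \ref{reilly} is (up to the constant $\binom{n}{k}$) the derivative $\frac{d}{dt}W_{k-1,F}(\overline{\Omega_t})$, one applies H\"older's inequality with exponents adapted to the power $k+1$ to obtain
\[
\int_{\Sigma_t}S_{k-1}(\kappa_F)F^{k}(\nabla u)F(\nu)\,d\mathcal H^{n-1}\ \ge\ \frac{\big(\int_{\Sigma_t}S_{k-1}(\kappa_F)F(\nu)\,d\mathcal H^{n-1}\big)^{k+1}}{\big(\int_{\Sigma_t}\frac{S_{k-1}(\kappa_F)F(\nu)}{F(\nabla u)}d\mathcal H^{n-1}\big)^{k}}.
\]

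Next I would identify both pieces on the right in terms of mixed-volume quantities of $\overline{\Omega_t}$. The numerator is $\big(n\binom{n-1}{k-2}W_{k-1,F}(\overline{\Omega_t})\big)^{k+1}$ by \eqref{KL} (with index shift), while the denominator, via Proposition \ref{reilly}, is a constant times $\big([\zeta_{k-1,F}(\overline{\Omega_t})]^{n-k}\,\frac{d}{dt}\zeta_{k-1,F}(\overline{\Omega_t})\big)^{k}$. Setting $R=\zeta_{k-1,F}(\bar\Omega)$ and recalling from Proposition \ref{basic-prop} that $\rho(r)=u^*_{k-1,F}$ with $r=\zeta_{k-1,F}(\overline{\Omega_t})$ satisfies $\frac{d}{dt}\zeta_{k-1,F}(\overline{\Omega_t})=(\rho'(r))^{-1}$, the change of variables $t\mapsto r$ turns $I_{k,F}[u,\Omega]$ into an integral over $[0,R]$ of the form $c_n\int_0^R r^{\,a}\,|\rho'(r)|^{-?}\,W_{k-1,F}(\overline{\Omega_t})^{?}\,dr$; here the Alexandrov--Fenchel inequality \eqref{afineq} enters to bound $W_{0,F}$ or the relevant lower mixed volume appearing after the bookkeeping by a power of $W_{k-1,F}(\overline{\Omega_t})=\kappa_n r^{n-k+1}$, with equality iff $\overline{\Omega_t}$ is homothetic to $\mathcal W$. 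Carrying the exponents through, one arrives exactly at $\kappa_n\binom{n}{k}\int_0^R r^{\,n-k}(\rho'(r))^{k+1}dr$, which by \eqref{intrad} is $I_{k,F}[u^*_{k-1,F},\Omega^*_{k-1,F}]$, since $\Omega^*_{k-1,F}=\mathcal W_R$ and $\rho$ plays the role of $v$.

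For the equality case I would track the two places where inequality was used: the H\"older step forces $F(\nabla u)$ to be constant along each $\Sigma_t$, and the Alexandrov--Fenchel step forces each $\overline{\Omega_t}$ to be homothetic to $\mathcal W$, i.e.\ a Wulff ball; combining these (and that $\Omega=\Omega_M$ is then a Wulff ball, with $u$ having Wulff-ball level sets on which $F(\nabla u)$ is constant, hence $u$ is anisotropic radial) gives the stated rigidity. The main obstacle I anticipate is the bookkeeping of constants and exponents in reducing the surface integrals to the one-dimensional functional --- in particular making sure the combinatorial factors from \eqref{KL}, Proposition \ref{reilly}, and \eqref{intrad} match up --- and, more substantively, verifying that the quantity one is forced to bound below by Alexandrov--Fenchel is genuinely a \emph{lower}-order mixed volume of $\overline{\Omega_t}$ so that \eqref{afineq} applies in the correct direction; a secondary technical point is handling the set of degenerate levels (where $\nabla u=0$) and the plateau $r\in[\zeta_{k-1,F}(\overline{\Omega_M}),R]$, which contributes nothing to either side by Sard's theorem and the definition of $u^*_{k-1,F}$.
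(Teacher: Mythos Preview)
Your approach is essentially the same as the paper's: co-area formula to reduce $I_{k,F}$ to integrals over level sets of $S_{k-1}(\kappa_F)F^k(\nabla u)F(\nu)$, then H\"older to split off the factor $F(\nabla u)$, then identification of the two surface integrals with mixed-volume quantities of $\overline{\Omega_t}$, then the Alexandrov--Fenchel inequality, then the change of variables $r=\zeta_{k-1,F}(\overline{\Omega_t})$ and \eqref{intrad}. The structure, the key inequalities, and the equality analysis are all the same.

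One correction to your bookkeeping that resolves the obstacle you flag at the end: the numerator $\int_{\Sigma_t}S_{k-1}(\kappa_F)F(\nu)\,d\mathcal H^{n-1}$ is \emph{not} $W_{k-1,F}(\overline{\Omega_t})$; by \eqref{KL} it equals $n\binom{n-1}{k-1}W_{k,F}(\overline{\Omega_t})=n\binom{n-1}{k-1}\kappa_n[\zeta_{k,F}(\overline{\Omega_t})]^{n-k}$. So after H\"older you have the numerator in terms of $\zeta_{k,F}$ and the denominator (via Proposition~\ref{reilly}) in terms of $\zeta_{k-1,F}$ and its $t$-derivative. This is precisely where Alexandrov--Fenchel enters, as $\zeta_{k-1,F}(\overline{\Omega_t})\le \zeta_{k,F}(\overline{\Omega_t})$, and since the numerator sits in a lower bound for the Hessian integral, replacing $\zeta_{k,F}$ by the smaller $\zeta_{k-1,F}$ goes in the right direction. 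After this substitution all quantities are functions of $\zeta_{k-1,F}(\overline{\Omega_t})$ alone, and the constants collapse to the $\kappa_n\binom{n}{k}$ you need to match \eqref{intrad}.
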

\begin{proof}
As usual, we drop the subscript $F$ for simplicity. By \eqref{byparts}, we have
\[
I_{k}[u,\Omega]= \frac{1}{k}\int_{m}^{{0}}  \int_{\S_t} S_{k-1}(\kappa_{F}) F^{k}(\nabla u) F(\nu)\, d\mathcal H^{n-1}dt.
\]
On the other hand, since $\O_t$ is convex for all $t$, by using the Alexandrov-Fenchel inequality \eqref{afineq}, \eqref{KL}, the  
H\"older inequality and \eqref{reillyder}, we get
\begin{eqnarray*}
&&\left(n\kappa_{n}\binom{n-1}{k-1}\right)^{k+1}[\zeta_{k-1}(\overline{\O_t})]^{(n-k)(k+1)}\\&\le& \left(n\kappa_{n}\binom{n-1}{k-1}\right)^{k+1}[\zeta_k(\overline{\O_t})]^{(n-k)(k+1)}\\ &=&
\left(\int_{\S_t}S_{k-1}(\kappa_{F}) F(\nu) d\mathcal H^{n-1} \right)^{k+1} 
\\&\le &  \left(\int_{\S_t} \frac{S_{k-1}(\kappa_{F})}{F(\nabla u)} F(\nu) d\mathcal H^{n-1}\right)^{k} \int_{\S_t} S_{k-1}(\kappa_{F}) F(\nabla u)^{k}F(\nu)d \mathcal H^{n-1}\\&=&
\left\{(n-k+1)\k_n\binom{n}{k-1}[\zeta_{k-1}(\overline{\O_t})]^{n-k} \frac{d}{dt}\zeta_{k-1}(\overline{\O_t}) \right\}^{k}\int_{\S_t} S_{k-1}(\kappa_{F}) F(\nabla u)^{k}F(\nu)d \mathcal H^{n-1}.
\end{eqnarray*}
It follows that
\begin{eqnarray}\label{xeq1}
\frac1k\int_{\S_t} S_{k-1}(\kappa_{F}) F(\nabla u)^{k}F(\nu)d \mathcal H^{n-1}\ge  \k_n\binom{n}{k}\frac{[\zeta_{k-1}(\overline{\O_t})]^{n-k}}{\left[\frac{d}{dt}\zeta_{k-1}(\overline{\O_t}) \right]^{k}}.
\end{eqnarray}
Hence,  we get
\begin{eqnarray*}
I_{k}[u,\Omega] \ge  \k_n\binom{n}{k}\int_m^{{0}} \frac{[\zeta_{k-1}(\overline{\O_t})]^{n-k}}{\left[\frac{d}{dt}\zeta_{k-1}(\overline{\O_t}) \right]^{k}} \, dt 
= \kappa_{n}  \binom{n}{k} \int_{0}^{R}r^{n-k}(\rho_{k-1}'(r))^{k+1}\, dr, 
\end{eqnarray*}
where $R= \zeta_{k-1}(\overline{\O})$.
In the last equality we have made the change of variables $r=\zeta_{k-1}(\overline{\O_t})$ and used Proposition \ref{basic-prop}. 

Finally, recalling that  $\rho_{k-1}(r)=u_{k-1}^*(x)$ and $\zeta_{k-1}(\overline{\O})=\zeta_{k-1}(\overline{\O_{k-1}^*})$, taking into account of \eqref{intrad}, we get the assertion.

If the equality holds, one sees from the above proof that $\S_t$ is Wulff shape and $F(\n u)$ is constant on $\S_t$. This implies that $\O$ is a Wulff ball and $u$ is an anisotropic radial function.\end{proof}

\

\section{Applications}

\subsection{A comparison result}\

In this subsection we use the symmetrization with respect to mixed volumes to prove a sharp comparison result. 

\begin{thm}
Let  $\Omega \subset \R^{n}$ be an open, bounded, convex set {with $C^{2}$ boundary} and $f\in L^1(\Omega)$ be a nonnegative function. {Let $$f_{0,F}^{*}(x)=-(-f)_{0, F}^*=\inf\{t\ge 0: |\overline{\{f>t\}}|\le \kappa_n F^o(x)^n\}.$$}
Let $u \in \Phi_0(\Omega)$ 
satisfy
\begin{equation}
\label{pbmain}
S_{k,F}[u]\le f(x), \hbox{a.e. in }\Omega
\end{equation}
 Then 
  \begin{equation}
  \label{confr}
  u^{*}_{k-1,F}(x)\ge v(x) \text{ in } \Omega_{k-1,F}^{*},
    \end{equation}
    where $v$ is the unique anisotropic radially symmetric solution of the following symmetrized problem:
    \begin{equation}
\label{pbsim}
\begin{cases}
S_{k,F}[v]=f_{0,F}^{*}(x) &\text{ in }\Omega_{k-1,F}^{*}\\
v=0 &\text{ on }\de \Omega_{k-1,F}^{*},
\end{cases}
\end{equation}

\end{thm}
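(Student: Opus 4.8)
The plan is to combine the P\'olya--Szeg\H o type inequality of Theorem~\ref{pol} with a comparison principle for the symmetrized ODE problem \eqref{pbsim}, following the classical Talenti--Tso strategy adapted to the anisotropic setting. First I would establish, via the argument of Theorem~\ref{pol} applied on each sub-level set $\O_t$ rather than on $\O$ itself, a pointwise (in $t$) differential inequality for $\zeta_{k-1,F}(\overline{\O_t})$. Concretely, for a.e. $t\in[m,0]$ one integrates $S_{k,F}[u]\le f$ over $\O_t$, uses the by-parts/co-area identity \eqref{byparts} to write $\int_{\O_t}(-u+t)S_{k,F}[u]\,dx$ as an integral over $\S_s$, $s\in[m,t]$, of $S_{k-1}(\k_F)F^k(\n u)F(\nu)$, and then applies the chain of Alexandrov--Fenchel \eqref{afineq}, H\"older, and \eqref{reillyder} exactly as in the proof of Theorem~\ref{pol}. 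This yields, after the change of variables $r=\zeta_{k-1,F}(\overline{\O_s})$ and setting $\rho_{k-1}(r)=u^*_{k-1,F}(x)$, an inequality of the form
\[
\kappa_n\binom{n}{k}\int_0^{r}\tau^{n-k}(\rho_{k-1}'(\tau))^{k+1}\,d\tau \ \ge\ \int_0^{r}\tau^{n-k}(w'(\tau))^{k+1}\,d\tau,
\]
where $w$ solves the symmetrized problem and $r$ ranges over $[0,R]$, $R=\zeta_{k-1,F}(\overline\O)$; here one also needs the monotone rearrangement inequality $f_0^*$ is the decreasing rearrangement of $f$ with respect to the mixed-volume measure, so that $\int_{\O_t}f\,dx\le \int_{\O^*_t}f_0^*\,dx$ (this is where the definition of $f_0^*$ and Proposition~\ref{equimeasprop}-type equimeasurability enter).

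Next I would derive the symmetrized ODE explicitly. By Proposition~\ref{skrad}, for an anisotropic radial function $v(x)=w(r)$ the equation $S_{k,F}[v]=f_0^*$ in $\mathcal W_R$ with $w(R)=0$, $w'(0)=0$ reads
\[
\binom{n-1}{k-1}r^{-(n-1)}\Bigl(\frac{r^{n-k}}{k}(w'(r))^k\Bigr)' = f_0^*(r),
\]
which integrates directly: $r^{n-k}(w'(r))^k = \dfrac{k}{\binom{n-1}{k-1}}\int_0^r s^{n-1}f_0^*(s)\,ds\,\big/\,(\text{const})$, giving an explicit, nonnegative, nondecreasing formula for $w'(r)$ and hence for $w$ via a further integration with the boundary condition $w(R)=0$. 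In particular $w\le 0$ on $[0,R]$ and $w$ is unique; existence and uniqueness of the anisotropic radial solution $v$ is thereby reduced to this elementary integration, so the claim ``$v$ is the unique anisotropic radially symmetric solution'' is verified along the way.

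Finally I would conclude $u^*_{k-1,F}\ge v$ pointwise, i.e.\ $\rho_{k-1}(r)\ge w(r)$ on $[0,R]$. Since both functions vanish at $r=R$, it suffices to compare their derivatives in an integrated sense. From the explicit form of $w'$ one has $r^{n-k}(w'(r))^{k+1} = r^{n-k}w'(r)\cdot (w'(r))^k$, and using the displayed integral inequality together with the monotonicity of $w'$ one shows $\rho_{k-1}'(r)\le w'(r)$ for a.e.\ $r$ — the standard trick is: if it failed on a set of positive measure, a rearrangement/Chebyshev argument comparing $\int_0^r \tau^{n-k}(\rho_{k-1}')^{k+1}$ against $\int_0^r \tau^{n-k}(w')^{k+1}$ under the constraint $\int_0^r \tau^{n-k}\rho_{k-1}'\cdot(\text{suitable kernel}) \le \int_0^r(\cdots)$ would be violated. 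Integrating $\rho_{k-1}'\le w'$ from $r$ to $R$ and using $\rho_{k-1}(R)=w(R)=0$ gives $\rho_{k-1}(r)\ge w(r)$, which is \eqref{confr}.

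I expect the main obstacle to be the passage from the \emph{integrated} inequality (P\'olya--Szeg\H o applied to each $\O_t$) to the \emph{pointwise} comparison $\rho_{k-1}'(r)\le w'(r)$: this requires a careful monotone-rearrangement argument for the nonlinear functional $\int \tau^{n-k}(\cdot)^{k+1}$, not merely the linear Talenti case, and one must handle the degeneracy where $w'$ or $\rho_{k-1}'$ vanishes (i.e.\ on level sets where $f_0^*=0$, corresponding to the region $r\in[\zeta_{k-1,F}(\overline{\O_M}),R]$ in Proposition~\ref{basic-prop}(iii)) as well as the a.e.\ differentiability issues inherited from Proposition~\ref{basic-prop}. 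A secondary technical point is justifying the co-area and integration-by-parts manipulations on $\O_t$ when $\S_t$ is only non-degenerate for a.e.\ $t$, which is handled exactly as in the proof of Theorem~\ref{pol}.
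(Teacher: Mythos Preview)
Your overall direction is right, but you take an unnecessary detour that creates the very obstacle you flag. The paper does \emph{not} integrate the weighted quantity $(-u+t)S_{k,F}[u]$ over $\O_t$ (which, via \eqref{byparts}, introduces an integral over $s\in[m,t]$ and leads to your \emph{integrated} inequality). Instead it integrates the inequality $S_{k,F}[u]\le f$ itself over $\O_t$. Since $kS_{k,F}[u]=\sum_j\partial_j\bigl(S_k^{ij}[u]FF_i\bigr)$ by \eqref{xeq0}, the divergence theorem converts $\int_{\O_t}S_{k,F}[u]\,dx$ into a single boundary integral on $\S_t$,
\[
\int_{\O_t} S_{k,F}[u]\,dx \;=\; \frac{1}{k}\int_{\S_t} S_{k-1}(\kappa_F)\,F^k(\nabla u)\,F(\nu)\,d\mathcal H^{n-1},
\]
and then the \emph{same} chain (Alexandrov--Fenchel \eqref{afineq} $+$ H\"older $+$ \eqref{reillyder}) that underlies Theorem~\ref{pol} bounds this below by $\kappa_n\binom{n}{k}\,[\zeta_{k-1}(\overline{\O_t})]^{n-k}\big/\bigl[\tfrac{d}{dt}\zeta_{k-1}(\overline{\O_t})\bigr]^k$; this is precisely \eqref{xeq1}. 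On the other side, Hardy--Littlewood together with Alexandrov--Fenchel gives $\int_{\O_t} f \le n\kappa_n\int_0^{\zeta_{k-1}(\overline{\O_t})}f_0^*(s)s^{n-1}\,ds$. Combining and changing variables $r=\zeta_{k-1}(\overline{\O_t})$ yields, \emph{pointwise for a.e.\ $r$},
\[
\rho_{k-1}'(r)\;\le\;\Bigl(\tfrac{n}{\binom{n}{k}}\,r^{-(n-k)}\!\int_0^r f_0^*(s)s^{n-1}\,ds\Bigr)^{1/k}\;=\;w'(r),
\]
the right-hand side being read off from Proposition~\ref{skrad}. Integrating from $r$ to $R$ with the common boundary value $0$ gives \eqref{confr}.

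So the ``main obstacle'' you identify --- recovering the pointwise comparison $\rho_{k-1}'\le w'$ from an integrated inequality for $\int_0^r\tau^{n-k}(\cdot)^{k+1}\,d\tau$ --- is a genuine difficulty, and the Chebyshev/rearrangement sketch you offer does not resolve it for this nonlinear functional. The paper's route simply avoids it: by not weighting with $(-u+t)$ there is no integration in $t$ to undo, and the pointwise derivative inequality falls out directly. Your discussion of the symmetrized ODE via Proposition~\ref{skrad} and the final integration step are correct and match the paper.
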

 
\begin{proof}
 We integrate both sides of the equation in \eqref{pbmain} on the sub-level set $\O_t$. Using \eqref{xeq0}, {the divergence theorem, \eqref{k-curv}} and \eqref{xeq1},  we get
 \begin{eqnarray}
 \label{pezprin}
 \int_{\O_t}f(x)\,dx\ge\int_{\O_t}S_{k,F}[u]\, dx&=& \frac 1 k \int_{\S_t} S_{k-1}(\kappa_{F}) F^{k}(\nabla u) F(\nu)\, d\mathcal H^{n-1} \nonumber
 \\&\ge & \k_n\binom{n}{k}\frac{[\zeta_{k-1}(\overline{\O_t})]^{n-k}}{\left[\frac{d}{dt}\zeta_{k-1}(\overline{\O_t}) \right]^{k}}.
\end{eqnarray}
On the other hand, denoting by $f^{*}_{0}(r)=f^{*}_{0}(x)$, $r=F^{o}(x)$, using the Hardy-Littlewood inequality and the Aleksandrov-Fenchel inequality \eqref{afineq}, it holds that
\begin{eqnarray}
\label{pezf}
\int_{\O_t}f(x)\, dx&\le & \int_{0}^{|\O_t|}f^{*}_{0,F}\left(\left(\frac{r}{\kappa_{n}}\right)^{\frac{1}{n}}\right)\,dr \nonumber\\& \le&   \int_{0}^{\kappa_{n}\zeta_{k-1}(\overline{\O_t})^{n}}f^{*}_{0,F}\left(\left(\frac{r}{\kappa_{n}}\right)^{\frac{1}{n}}\right)\,dr\nonumber\\&= &n \kappa_{n}\int_{0}^{\zeta_{k-1}(\overline{\O_t})} f^{*}_{0,F}(s)s^{n-1}\,ds
\end{eqnarray}
Combining \eqref{pezprin} and \eqref{pezf} we obtain
\[
n \int_{0}^{\zeta_{k-1}(\overline{\O_t})} f^{*}_{0,F}(s)s^{n-1}\,ds\ge  \binom{n}{k}\frac{[\zeta_{k-1}(\overline{\O_t})]^{n-k}}{\left[\frac{d}{dt}\zeta_{k-1}(\overline{\O_t}) \right]^{k}}.
\]
Let $r=\zeta_{k-1}(\overline{\O_t})$, recalling Proposition \ref{basic-prop} (iv), we get
\begin{eqnarray}\label{xxeq1}
 \rho_{k-1}'(r)\le \left(\frac{n}{\binom{n}{k}}r^{-(n-k)} \int_{0}^r f^{*}_{0,F}(s)s^{n-1}\,ds\right)^{\frac1k}, \hbox{ for a.e. }t.
\end{eqnarray}

Fix $x \in \Omega_{k-1}^{*}$. Integrating \eqref{xxeq1} over $[\bar r, \zeta_{k-1}(\overline{\O})]$, noting that $\rho_{k-1}(\zeta_{k-1}(\overline{\O}))=0$, we have
\[
u^{*}_{k-1}(x)=\rho_{k-1}(F^o(x))\ge-\bigg(\frac{n}{\binom{n}{k}}\bigg)^{\frac 1 k} \int_{F^o(x)}^{\zeta_{k-1}(\overline{\O})}\left(r^{-(n-k)}\int_{0}^{r}f_{0,F}^{*}(s)s^{n-1}\,ds\right)^{\frac 1 k}\,dr.
\]
On the other hand, in view of Proposition \ref{skrad}, we are able to solve the solution to \eqref{pbsim} as
$$v(x)= -\bigg(\frac{n}{\binom{n}{k}}\bigg)^{\frac 1 k} \int_{F^o(x)}^{\zeta_{k-1}(\overline{\O_{k-1}^*})}\left(r^{-(n-k)}\int_{0}^{r}f_{0,F}^{*}(s)s^{n-1}\,ds\right)^{\frac 1 k}\,dr.$$
The assertion follows.
\end{proof}

\begin{rem}
{
The above comparison result cannot apply to solutions to general $k$-Hessian equations since it applies only to quasi convex functions and the quasi-convexity of such solutions is know only in some special case (for example in the Euclidean case, if $k=2$ and $n=3$). For further readings on this we refer, for example, to \cite{mx,lmx,sa12}. 
}
\end{rem}

\smallskip
\subsection{Sharp Sobolev type inequalities}\

Let $u\in \Phi_0(\O)$ and $p\ge 1$, $k=1,\ldots,n$. Consider integral functionals of the form
\begin{eqnarray*}
I_{k,p,F}[u,\O]=\int_{\O} S^{ij}_{k}[u]F^{p-k}F_iu_j\, dx.
\end{eqnarray*}
It is direct to see 
$$I_{k,k+1,F}=k\,I_{k,F},$$ 
the anisotropic $k$-Hessian integral and $$I_{1,p,F}=\int_{\O} F^p(\n u)\, dx,$$ the anisotropic $p$-Dirichlet integral.

By similar argument as in last section, we are able to prove the following P\'olya-Szeg\H o inequality.
\begin{thm}
\label{pol-2}
Let $u \in \Phi_0(\Omega)$ and $p\ge 1$. Then
\[
I_{k, p, F}[u,\Omega]\ge I_{k, p, F}[u^{*}_{k-1,F},\Omega^{*}_{k-1,F}].
\]
Equality holds if and only if  $\O$ is a Wulff ball and $u$ is an anisotropic radial function.
\end{thm}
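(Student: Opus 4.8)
The plan is to mimic the proof of Theorem \ref{pol}, with $F^k(\nabla u)$ replaced by $F^{p-1}(\nabla u)$ throughout, which amounts to replacing the exponent $k$ by $p-1$ in the appropriate places of the H\"older inequality step. First I would record the integral-geometric representation of $I_{k,p,F}$. Using \eqref{k-curv} (which gives $\sum_{i,j}S_{k}^{ij}[u]u_jF_i = F^{k}(\nabla u) S_{k-1}(\kappa_F)$ on a non-degenerate level set) together with the co-area formula exactly as in the proof of \eqref{byparts}, one obtains
\begin{equation*}
I_{k,p,F}[u,\Omega]=\int_{m}^{M}\int_{\Sigma_t} S_{k-1}(\kappa_F)\, F^{p-1}(\nabla u)\, F(\nu)\, d\mathcal H^{n-1}\, dt,
\end{equation*}
which for $p=k+1$ reduces to $k\,I_{k,F}[u,\Omega]$ as claimed in the text; I should double-check the degenerate values of $t$ are a null set by Sard's theorem, so the representation is legitimate.

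Next I would run the pointwise-in-$t$ estimate. Fix a non-degenerate $\Sigma_t$; since $\Omega_t$ is convex, the Alexandrov--Fenchel inequality \eqref{afineq} gives $\zeta_{k}(\overline{\Omega_t})\ge \zeta_{k-1}(\overline{\Omega_t})$, and \eqref{KL} identifies $\int_{\Sigma_t}S_{k-1}(\kappa_F)F(\nu)\,d\mathcal H^{n-1}$ with $n\binom{n-1}{k-1}\kappa_n[\zeta_{k-1}(\overline{\Omega_t})]^{n-k}$. Applying H\"older's inequality with exponents $\tfrac{p}{p-1}$ and $p$ to the splitting
\begin{equation*}
S_{k-1}(\kappa_F)F(\nu)=\left(\frac{S_{k-1}(\kappa_F)F(\nu)}{F(\nabla u)}\right)^{\frac{p-1}{p}}\big(S_{k-1}(\kappa_F)F^{p-1}(\nabla u)F(\nu)\big)^{\frac1p},
\end{equation*}
raising to the $p$-th power and using \eqref{reillyder} to rewrite $\int_{\Sigma_t}\frac{S_{k-1}(\kappa_F)F(\nu)}{F(\nabla u)}\,d\mathcal H^{n-1}$ as a multiple of $[\zeta_{k-1}(\overline{\Omega_t})]^{n-k}\frac{d}{dt}\zeta_{k-1}(\overline{\Omega_t})$, I get
\begin{equation*}
\int_{\Sigma_t}S_{k-1}(\kappa_F)F^{p-1}(\nabla u)F(\nu)\,d\mathcal H^{n-1}\ \ge\ C_{n,k,p}\,\frac{[\zeta_{k-1}(\overline{\Omega_t})]^{n-k}}{\left[\tfrac{d}{dt}\zeta_{k-1}(\overline{\Omega_t})\right]^{p-1}},
\end{equation*}
for an explicit constant; this is the $p$-analogue of \eqref{xeq1}.

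Then I would integrate over $t\in[m,M]$, change variables $r=\zeta_{k-1}(\overline{\Omega_t})$ using Proposition \ref{basic-prop}(iv) (so that $\frac{d}{dt}\zeta_{k-1}(\overline{\Omega_t})=(\rho_{k-1}'(r))^{-1}$ and $dt=\rho_{k-1}'(r)\,dr$), obtaining $I_{k,p,F}[u,\Omega]\ge C'_{n,k,p}\int_0^R r^{n-k}(\rho_{k-1}'(r))^{p}\,dr$ with $R=\zeta_{k-1}(\overline\Omega)$. The final task is to identify the right-hand side with $I_{k,p,F}[u^*_{k-1,F},\Omega^*_{k-1,F}]$: since $u^*_{k-1,F}$ is the anisotropic radial function $v(r)=\rho_{k-1}(r)$ on the Wulff ball of radius $R$, I would use Proposition \ref{skrad} (the formula $\sum_{i,j}S_k^{ij}[u]FF_iu_j=\binom{n-1}{k-1}r^{-(k-1)}v'(r)^{k+1}$, more generally $S_k^{ij}[u]F^{p-k}F_iu_j$ integrates against the co-area factor $n\kappa_n r^{n-1}$) to compute $I_{k,p,F}$ on such a function directly and match constants; the computation is the same as in the proof of \eqref{intrad}, just with $v'(r)^{k+1}$ replaced by $v'(r)^{p}$ and the power of $r$ adjusted. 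Finally, equality forces equality in Alexandrov--Fenchel and in H\"older on a.e. $\Sigma_t$, hence $\Sigma_t$ is a Wulff shape and $F(\nabla u)$ is constant on each level set, i.e. $\Omega$ is a Wulff ball and $u$ is anisotropic radial.

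The main obstacle I anticipate is purely bookkeeping: tracking the combinatorial constants ($n$, $\binom{n-1}{k-1}$, $\binom{n}{k}$, $\kappa_n$, and the new factors $n-k+1$ versus $n-k$ coming out of \eqref{reillyder} and \eqref{KL}) through the H\"older step with the non-integer exponent $p$, and making sure the constant that emerges from the level-set estimate is exactly the one produced by Proposition \ref{skrad} on anisotropic radial functions, so that the two sides of the claimed inequality genuinely coincide when $\Omega=\mathcal W_R$ and $u=v(F^o(\cdot))$. A secondary point requiring a line of care is that $S_k^{ij}$ is not symmetric, so when invoking \eqref{k-curv} and Proposition \ref{skrad} I must use the specific contraction $\sum_{i,j}S_k^{ij}[u]F_iu_j$ (not any symmetrization of it); but this is exactly the quantity appearing in $I_{k,p,F}$, so no genuine difficulty arises — everything reduces to the scalar ODE computation in $r$.
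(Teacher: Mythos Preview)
Your proposal is correct and is exactly the ``similar argument'' the paper alludes to: the paper does not spell out a proof of Theorem \ref{pol-2} at all, merely stating that it follows by the same reasoning as Theorem \ref{pol}, and what you wrote is precisely that reasoning with the H\"older exponents $(k+1)/k,\,k+1$ replaced by $p/(p-1),\,p$. The points you flag (constant bookkeeping and the asymmetry of $S_k^{ij}$) are the only things to watch, and your handling of them is correct.
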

Using this, we prove the following sharp Sobolev type inequality.
\begin{thm}
\label{pol-3}
Let be $u \in \Phi_0(\Omega)$, $k=1,\ldots,n$. Then
\begin{equation}
\label{poleq}
\|u\|_{L^q(\O)}^p\le C(n,k,p,F) I_{k,p,F}[u,\O],
\end{equation}
for $q= \frac{np}{n-k+1-p}$ and $p<n-k+1$, where 
\[
C(n,k,p,F)=\left( \displaystyle \frac{p-1}{n-k+1-p}\right)^{p-1}\left[ k \binom{n}{k}\right]^{-1} \\\times \left\{ \displaystyle \frac{\Gamma \left(\frac{np}{k-1+p} \right)}{\Gamma \left(\frac{n}{k-1+p} \right)\Gamma \left(1+\frac{n(p-1)}{k-1+p} \right)\kappa_{n}}\right\}^{\frac{k-1+p}{n}}
\]
is the optimal costant.
\end{thm}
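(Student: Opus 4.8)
The plan is to reduce the Sobolev-type inequality \eqref{poleq} to its anisotropic radial case via the P\'olya-Szeg\H o inequality of Theorem \ref{pol-2}, and then to optimize over one-dimensional profiles. First I would apply Theorem \ref{pol-2} together with Proposition \ref{equimeasprop}: since $I_{k,p,F}[u,\O]\ge I_{k,p,F}[u^*_{k-1,F},\O^*_{k-1,F}]$ and $\|u\|_{L^q(\O)}\le \|u^*_{k-1,F}\|_{L^q(\O^*_{k-1,F})}$, it suffices to prove \eqref{poleq} when $\O=\mathcal W_{R}$ is a Wulff ball and $u(x)=v(r)$, $r=F^o(x)$, is an anisotropic radial function with $v'(0)=0$, $v(R)=0$. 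By Proposition \ref{skrad} and the co-area computation already carried out for \eqref{intrad} (which gives $\int_{\p\mathcal W_r}|\n F^o|^{-1}d\mathcal H^{n-1}=n\k_n r^{n-1}$), one has on the one hand
\[
I_{k,p,F}[u,\mathcal W_R]=k\k_n\binom{n}{k}\int_0^R r^{n-k}|v'(r)|^{p}\,dr,
\]
using $S^{ij}_k[u]F^{p-k}F_iu_j = F^{p-k-1}\cdot F F_i u_j S_k^{ij}[u]$ and the formula $\sum_{i,j}S_k^{ij}[u]FF_iu_j=\binom{n-1}{k-1}r^{-(k-1)}v'(r)^{k+1}$ from Proposition \ref{skrad}, so the $v'$ power collapses to $p$; and on the other hand $\|u\|_{L^q(\mathcal W_R)}^q = n\k_n\int_0^R |v(r)|^q r^{n-1}\,dr$.

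So the remaining task is purely one-dimensional: prove the sharp inequality
\[
\left(\int_0^R |v(r)|^q r^{n-1}\,dr\right)^{p/q}\le C'\int_0^R r^{n-k}|v'(r)|^p\,dr,
\]
for WLOG nonincreasing (in $-v$, i.e. $v$ increasing to $0$) profiles, with $q=\tfrac{np}{n-k+1-p}$. This is a weighted one-dimensional Bliss-type / Hardy–Sobolev inequality, and the extremals are explicit: one expects $v$ of the form $v(r)=-c\,(b^{\gamma}-r^{\gamma})^{\beta}$ for suitable exponents, or equivalently after the substitution $s=r^{\gamma}$ the problem becomes the classical Bliss inequality whose sharp constant involves Gamma functions. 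The key step is to identify the right change of variables that turns $\int_0^\infty r^{n-k}|v'|^p dr$ and $\int_0^\infty |v|^q r^{n-1}dr$ into the standard Bliss pair $\int_0^\infty |w'(t)|^p dt$ versus $\int_0^\infty |w(t)|^q t^{\alpha}dt$; matching the homogeneity degrees $q=\tfrac{np}{n-k+1-p}$ forces the parameters, and then Bliss's theorem (or Talenti's computation of the best constant in the radial Sobolev inequality) supplies both the value $C(n,k,p,F)$ and the extremal profile. The $F$-dependence enters only through the factor $\k_n=|\mathcal W|$, which is why the Wulff-ball volume appears inside $C(n,k,p,F)$.

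The main obstacle I anticipate is twofold. First, one must be careful that the reduction via Theorem \ref{pol-2} preserves the relevant scaling and that the class $\Phi_0$ is stable under the reduction (an anisotropic radial function on a Wulff ball is in $\Phi_0(\mathcal W_R)$ only after checking quasi-convexity, which holds since super/sub-level sets are Wulff balls); this is routine but needs the monotonicity of $\zeta_{k-1,F}$ and the properties collected in Proposition \ref{basic-prop}. Second, and more substantively, pinning down the exact constant $C(n,k,p,F)$ requires carefully executing the Bliss inequality computation with the correct exponents and verifying that the conjectured extremal $v$ is admissible (i.e. $C^2$ up to the boundary after the change of variables, with $v'(0)=0$) and actually attains equality in both the P\'olya–Szeg\H o step and the one-dimensional step — equality in Theorem \ref{pol-2} forces $\O$ to be a Wulff ball and $u$ anisotropic radial, which is consistent. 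Rigorously justifying the reduction of the weighted one-dimensional inequality to the standard Bliss form, and assembling the Gamma-function constant so that it matches the stated $C(n,k,p,F)$, is where the bookkeeping is heaviest; everything else follows from the machinery developed in Sections 2 and 3.
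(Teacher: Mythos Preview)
Your proposal is correct and follows essentially the same approach as the paper: reduce via Proposition \ref{equimeasprop} and Theorem \ref{pol-2} to anisotropic radial functions, then solve the resulting one-dimensional weighted Sobolev (Bliss-type) inequality. The paper's own proof is in fact just two sentences --- it performs the same reduction and then, rather than working out the one-dimensional computation as you do, simply cites Trudinger \cite[pp.~216--217]{tr2} for the Euclidean one-dimensional case, which is identical once the radial formulas have been established; your explicit identification of the Bliss inequality and the Gamma-function bookkeeping is exactly what that citation encapsulates.
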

\begin{proof}
By Proposition \ref{equimeasprop} and Theorem \ref{pol-2}, it is sufficient to prove \eqref{poleq} for anisotropic radially symmetric functions. Then it reduces to a one-dimensional problem, and we can argue exactly as in the Euclidean case (see \cite[pp. 216-217]{tr2}).
\end{proof}

\begin{rem}
In the case $p>n-k+1$ and $q\le \infty$, arguing as in \cite[p. 216]{tr2} it is possible to obtain that there exists a positive constant $C=C(n,k,p,F,\Omega)$ such that for $u \in \Phi_0(\Omega)$
\[
\|u\|_{L^{q}(\Omega)}^{p} \le C I_{k,p,F}[u,\O].
\]
Similarly, if $p=n-k+1$, then
\[
\|u\|^{p}_{L^{\Psi}(\Omega)} \le C I_{k, p, F}[u;\Omega]
\]
where $L^{\Psi}(\Omega)$ is the Orlicz space associated to the function $\Psi(t)=e^{|t|^{\frac{p}{p-1}}}-1$.
\end{rem}

\

\end{document}